\documentclass[a4paper]{amsart}

\author[F.~Jahnke]{Franziska Jahnke}
\address{Institute for Mathematical Logic and Foundations, Department of Mathematics and Computer Science,
University of M\"unster, 
Einsteinstraße 62,
48149 M\"unster, Germany}\email{franziska.jahnke@uni-muenster.de}

\author[M.~Ketelsen]{Margarete Ketelsen} 
\address{Max Planck Institute for Mathematics, Vivatsgasse 7, 53111 Bonn, Germany}
\email{ketelsen@mpim-bonn.mpg.de}

\author[F.~Vermeulen]{Floris Vermeulen}
\address{Institute for Mathematical Logic and Foundations, Department of Mathematics and Computer Science,
University of Münster,
Einsteinstraße 62,
48149 M\"unster, Germany}
\email{florisvermeulen.math@gmail.com}

\title[AKE principles in roughly deeply ramified henselian fields]{AKE principles in roughly deeply ramified henselian valued fields}

\usepackage[utf8]{inputenc}
\usepackage{thmtools}
\usepackage[pdfusetitle]{hyperref}

\usepackage[capitalise]{cleveref}

\usepackage{amsmath}
\usepackage{yhmath}
\usepackage{amsthm,stackengine,scalerel}
\usepackage{amssymb}
\usepackage{mathrsfs}
\usepackage{mathtools}
\usepackage{tikz-cd}
\usepackage{tikz}
\usetikzlibrary{arrows,calc,decorations.pathmorphing,decorations.text,patterns,patterns.meta,shapes,fit}
\usepackage{adjustbox}
\usepackage{enumitem}
\usepackage[capitalise]{cleveref}
\usepackage{csquotes}

\usepackage[colorinlistoftodos]{todonotes}

\usepackage{faktor}

\tikzset{curve/.style={settings={#1},to path={(\tikztostart)
			.. controls ($(\tikztostart)!\pv{pos}!(\tikztotarget)!\pv{height}!270:(\tikztotarget)$)
			and ($(\tikztostart)!1-\pv{pos}!(\tikztotarget)!\pv{height}!270:(\tikztotarget)$)
			.. (\tikztotarget)\tikztonodes}},
	settings/.code={\tikzset{quiver/.cd,#1}
		\def\pv##1{\pgfkeysvalueof{/tikz/quiver/##1}}},
	quiver/.cd,pos/.initial=0.35,height/.initial=0}

\newtheoremstyle{mystyle}% name of the style to be used
{}% measure of space to leave above the theorem. E.g.: 3pt
{}% measure of space to leave below the theorem. E.g.: 3pt
{\normalfont}% name of font to use in the body of the theorem \bfseries \itshape \scshape \sfserief etc...\itfam
{}% measure of space to indent
{\bfseries}% name of head font
{}% punctuation between head and body
{0.5 em}% space after theorem head; " " = normal interword space
{\thmname{#1}\thmnumber{ #2}\normalfont\thmnote{ (#3)}\bfseries.}% Manually specify head

\newtheoremstyle{mystyle2}% name of the style to be used
{}% measure of space to leave above the theorem. E.g.: 3pt
{}% measure of space to leave below the theorem. E.g.: 3pt
{\itshape}% name of font to use in the body of the theorem \bfseries \itshape \scshape \sfserief etc...\itfam
{}% measure of space to indent
{\bfseries}% name of head font
{}% punctuation between head and body
{0.5 em}% space after theorem head; " " = normal interword space
{\thmname{#1}\thmnumber{ #2}\normalfont\thmnote{ (#3)}\bfseries.}% Manually specify head 

\theoremstyle{mystyle2}
\newtheorem{thm}{Theorem}[section]
\newtheorem*{thm*}{Theorem}

\newtheorem{prop}[thm]{Proposition}
\crefname{prop}{Proposition}{Propositions}
\newtheorem{lem}[thm]{Lemma}
\crefname{lem}{Lemma}{Lemmas}
\newtheorem*{lem*}{Lemma}
\newtheorem{kor}[thm]{Corollary}

\theoremstyle{mystyle}
\newtheorem{defi}[thm]{Definition}
\newtheorem*{defi*}{Definition}
\crefname{defi}{Definition}{Definitions}
\newtheorem{examp}[thm]{Example}
\newtheorem*{examp*}{Example}
\crefname{examp}{Example}{Examples}
\newtheorem{rem}[thm]{Remark}
\crefname{rem}{Remark}{Remarks}

\crefname{case}{Case}{Cases}

\setlist[enumerate,1]{label = \normalfont(\arabic*),
	ref = \thethm.(\arabic*)}

\DeclareMathOperator{\id}{id}
\DeclareMathOperator{\res}{res}

\DeclareMathOperator{\ac}{ac}

\newcommand{\emptydefinition}{\emptyset}
\renewcommand{\emptyset}{\varnothing}

\renewcommand{\vec}[1]{\underline{#1}}

\newcommand{\map}[1]{\omega_{#1}}

\newcommand{\Z}{\mathbb{Z}}
\newcommand{\N}{\mathbb{N}}

\newcommand{\Q}{\mathbb{Q}}

\newcommand{\bC}{\mathbb{C}}

\DeclareMathOperator{\Frob}{Frob}
\DeclareMathOperator{\AC}{AC}

\newcommand{\cT}{\mathcal{T}}

\renewcommand{\L}{\mathcal{L}}
\newcommand{\Lring}{\L_{\mathrm{ring}}}
\newcommand{\Loag}{\L_{\mathrm{oag}}}

\newcommand{\Lval}{\L_{\mathrm{val}}}
\newcommand{\Ltval}{\L_{\mathrm{compval}}}

\newcommand{\LWitt}{\L_{\mathrm{Witt}}}

\renewcommand{\O}{\mathcal{O}}
\newcommand{\m}{\mathfrak{m}}

\newcommand{\LQE}{\L_\mathrm{QE}}
\newcommand{\Lold}{\L_\mathrm{old}}

\newcommand{\VF}{\mathrm{VF}}
\newcommand{\VG}{\mathrm{VG}}

\numberwithin{equation}{section}

\begin{document}
\begin{abstract}
    We show that for any henselian valued field of mixed characteristic $(0,p)$,
    the (existential) $\Lval$-theory of the valued field is determined by
    the (existential) theory of the value group in $\Loag$ with a constant for $v(p)$
    and the (existential) theory of the residue ring $\O_v/(p)$ in an expansion
    $\LWitt$ of the language of rings, provided $\O_v/(p)$ is semiperfect.
    We moreover show that the $\LWitt$-structure on $\O_v/(p)$ is $\Lring$-definable
    using constants, and that this is exactly the structure induced on $\O_v/(p)$
    by the ambient valued field. 
    
    As a consequence, we obtain a relative quantifier 
    elimination result (eliminating $K$-quantifiers) in a suitable language 
    for the theory of roughly deeply ramified henselian valued fields
    of mixed characteristic $(0,p)$.
\end{abstract}
    \maketitle
    
    \section{Introduction and Motivation}
    The aim of this paper is to prove Ax--Kochen/Ershov type theorems for the class of henselian
    valued fields $(K,v)$ of mixed characteristic $(0,p)$ with semiperfect residue ring 
    $\O_v/(p)$, reducing questions about $(K,v)$ -- or, more specifically, its (existential) $\Lval$-theory -- 
    down to the corresponding
    questions for $\O_v/(p)$ and $(vK,v(p))$. We develop a structure theory
    which can be seen as a relative quantifier elimination
    statement, and which requires to consider $\O_v/(p)$ as a structure in an expansion 
    of the $\Lring$-language which we call $\LWitt$.

    Ax--Kochen/Ershov type theorems have a long history, starting with the work of
    Ax and Kochen \cite{AK65I} and,
    independently, Ershov \cite{ershovElementaryTheoryMaximal1965} who proved that the theory of a henselian
    valued field of residue characteristic $0$ (in the language
    $\mathcal{L}_\mathrm{val}$ of valued fields)\footnote{Notations and terminology are
    explained in \cref{sec:not}.} is entirely determined by the theories of its
    residue field (in the language $\mathcal{L}_\mathrm{ring}$ of rings) and its value group (in the language $\mathcal{L}_\mathrm{oag}$ of ordered abelian
    groups). When $(K,v)$ is henselian of mixed characteristic $(0,p)$ such that $v(p)$
    is minimum positive in the value group, then analogous results hold (cf.~\cite{AK65II}
    for the case of perfect residue fields and \cite{anscombe-jahnke2022cohen} for imperfect ones). More generally, van den Dries establishes that if $(K,v)$ and $(L,w)$ are 
    henselian valued fields
    of mixed characteristic, then
    \begin{align}
        \underbrace{(K,v) \equiv (L,w)}_{\text{in }\mathcal{L}_\mathrm{val}} \Longleftrightarrow \underbrace{\mathcal{O}_v/(p^n) \equiv \mathcal{O}_w/(p^n)}_{\text{in }\mathcal{L}_{\mathrm{ring}}}
        \textrm{ for all }n \in \mathbb{N} \textrm{ and } \underbrace{vK \equiv wL}_{\text{in }\mathcal{L}_{\mathrm{oag}}} \tag{$\star$}\label{eq:AKE}
    \end{align}
    holds (see \cite{Weis} for the case of certain value groups, and \cite[p. 144]{vdd} for the general case).
    In the special case where the value group interval $[0,v(p))$ is finite (i.e., the valued field
    is finitely ramified) and the residue
    field is perfect, 
    Lee and Lee show in \cite{LeeLee} that a single residue ring $\mathcal{O}/(p^N)$ suffices when \(N\) is chosen sufficiently large.
    In \cite{anscombeAxKochenErshov2024}, Anscombe, Dittmann and Jahnke show that 
    for henselian finitely ramified fields of fixed ramification index $e=|[0,v(p))|$, the
    complete (existential) theory of the valued field can be reduced to 
    the theory of the value group in $\mathcal{L}_\mathrm{oag}$ and the theory
    of the residue field \emph{in an expansion} $\mathcal{L}_{p,e}$ of the language of rings.
    They also show that expanding the language on the residue field is necessary in full generality, and that the $\mathcal{L}_{p,e}$-structure is $\Lring$-definable with parameters
    on the residue field and is exactly the structure induced on the residue field in the
    $\mathcal{L}_\mathrm{val}$-structure $(K,v)$.

    In \cite{jahnke-kartas2023beyond}, two AKE-type principles are proven for henselian
    valued fields of mixed characteristic $(0,p)$ with semiperfect residue ring $\O/(p)$
    when the value of $p$ is not minimum positive, i.e., 
    roughly deeply ramified henselian
    fields of mixed characteristic (introduced in \cite{KuhlBla}). More precisely, if $(K,v) \subseteq (L,w)$
    are two such fields, then one has
    $$
     \underbrace{(K,v) \preceq (L,w)}_{\text{in }\mathcal{L}_\mathrm{val}} \Longleftrightarrow \underbrace{\mathcal{O}_v/(p) \preceq \mathcal{O}_w/(p)}_{\text{in }\mathcal{L}_{\mathrm{ring}}}\textrm{ and } \underbrace{vK \preceq wL}_{\text{in }\mathcal{L}_{\mathrm{oag}}}
    $$
    (this is a special case of \cite[Theorem 5.1.4]{jahnke-kartas2023beyond}) and 
     $$
     \underbrace{(K,v) \preceq_\exists (L,w)}_{\text{in }\mathcal{L}_\mathrm{val}} \Longleftrightarrow \underbrace{\mathcal{O}_v/(p) \preceq_\exists \mathcal{O}_w/(p)}_{\text{in }\mathcal{L}_{\mathrm{ring}}}\textrm{ and } \underbrace{vK \preceq_\exists wL}_{\text{in }\mathcal{L}_{\mathrm{oag}}},$$
    see \cite[Theorem 5.1.10]{jahnke-kartas2023beyond}. These theorems apply in particular to perfectoid fields, whose model-theoretic properties (in continuous
    logic) are also the subject of a recent paper by Rideau-Kikuchi, Scanlon, and Simon \cite{RSS}.

    In this paper, we now show corresponding principles for 
    elementary equivalence and existential equivalence, i.e., for
    $\equiv$ and $\equiv_\exists$.   Akin to the situation in \cite{anscombeAxKochenErshov2024}, we need to add
    structure on $\O/(p)$ in order to achieve this (cf.~\cref{ex:structure}). Thus, we
    consider the residue rings as $\mathcal{L}_\mathrm{Witt}$-structures, a
    language explained in \cref{sec:structure}. Our main theorem is the
    following:
    \begin{thm*}[\cref{thm:main}]
        Let $(K,v)$ and $(L,w)$ be henselian valued fields of mixed characteristic 
        $(0,p)$
        such that $\O_v/(p)$ and $\O_w/(p)$ are semiperfect. Then, we have
    $$\underbrace{(K,v) \equiv (L,w)}_{\text{in }\mathcal{L}_\mathrm{val}} \Longleftrightarrow \underbrace{\mathcal{O}_v/(p) \equiv \mathcal{O}_w/(p)}_{\text{in }\mathcal{L}_{\mathrm{Witt}}}\textrm{ and } \underbrace{(vK,v(p)) 
    \equiv (wL,w(p))}_{\text{in }\mathcal{L}_{\mathrm{oag}}\cup\{c\}},$$
        and 
        $$\underbrace{(K,v) \equiv_\exists (L,w)}_{\text{in }\mathcal{L}_\mathrm{val}} \Longleftrightarrow \underbrace{\mathcal{O}_v/(p) \equiv_\exists \mathcal{O}_w/(p)}_{\text{in }\mathcal{L}_{\mathrm{Witt}}}\textrm{ and } \underbrace{(vK,v(p)) \equiv_\exists (wL,w(p))}_{\text{in }\mathcal{L}_{\mathrm{oag}}\cup\{c\}},$$
        where $c$ is a constant symbol interpreted as the value of $p$.
        Moreover, when $vK$ and $wL$ are regular, the value group data on the right hand side can be omitted in both equivalences.
    \end{thm*}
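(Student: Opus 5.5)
Forward direction: the $\LWitt$-structure on $\O_v/(p)$ is interpretable in $(K,v)$ uniformly; it is the structure induced from the valued field, as stated in the abstract. Likewise $(vK,v(p))$ is interpretable. Hence $\Lval$-equivalence of the valued fields transfers to $\LWitt$-equivalence of the residue rings and $\Loag\cup\{c\}$-equivalence of the value groups. For the existential version one has to check that the interpretation sends existential formulas to existential formulas, up to equivalence in the theory of henselian valued fields with semiperfect $\O/(p)$. I would verify this directly: the residue ring $\O_v/(p)$ is existentially interpretable ($x\in\O_v$ is quantifier-free in $\Lval$, and $x\equiv y \bmod p$ is quantifier-free). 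The extra $\LWitt$-predicates should be given by existential $\Lval$-formulas, presumably built from lifting via $p$-th roots and Teichmüller-type representatives.

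Converse: the plan is to reduce to the embedding-type AKE principles of \cite{jahnke-kartas2023beyond} (Theorems 5.1.4 and 5.1.10). I pass to $\aleph_1$-saturated (or suitably saturated) elementary extensions of both fields; these remain henselian of mixed characteristic with semiperfect $\O/(p)$. The key step is to find a common substructure that embeds into both fields, and which can be taken henselian with semiperfect residue ring. Then the relative embedding theorems $\preceq$ and $\preceq_\exists$ apply to both inclusions.

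Concretely, I would use the tilt. The $\LWitt$-theory of $\O_v/(p)$ should determine the isomorphism type (in a saturated model) of relevant data: the perfect ring $\O_{v}^\flat$ together with the kernel of $\theta\colon W(\O_v^\flat)\to \widehat{\O_v}$, i.e.\ a distinguished element $\xi$. The $\LWitt$-constants and predicates presumably encode the coefficients of such a $\xi$ modulo $p$. Given $\LWitt$-equivalence, I would choose a countable (or small) elementary $\LWitt$-substructure $R_0$ common to both residue rings after saturation. Its tilt and $W(R_0^\flat)/(\xi)$ then give a common untilted perfectoid-like subring. Its fraction field, henselized, with value group a common pure subgroup containing $v(p)$, embeds into both $K$ and $L$ by saturation. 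This uses the value group theory with the constant $c$ to match the value of $p$.

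I expect this to be the main obstacle: the embedding of a common subfield $(F,u)$ into both sides such that $\O_u/(p)\preceq\O_v/(p)$ in $\Lring$ and $uF\preceq vK$. Elementarity of the residue rings alone in $\Lring$ is not enough unless the untilt is correctly matched, which is exactly why $\LWitt$ is needed. Once this is in place, Theorem 5.1.4 (resp.\ 5.1.10) gives $(F,u)\preceq(K^*,v^*)$ and $(F,u)\preceq(L^*,w^*)$, hence $\equiv$ (resp.\ $\equiv_\exists$). For the regular case, the theory of a regular ordered abelian group with a distinguished positive element is determined by its (non-)discreteness and the divisibility data of $c$. I would expect to read off both from $\O/(p)$ in $\Lring$: whether $p$ has roots, and the ideal structure. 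So the value group hypothesis is redundant there.
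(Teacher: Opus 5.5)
Your forward direction is fine, and simpler than you suggest: $\ker_n(\vec x;\vec y)$ just says $v(W_{n-1}(\vec x)-W_{n-1}(\vec y))\ge v(p^n)$ for arbitrary lifts. That is a quantifier-free condition, so no $p$-th roots or Teichm\"uller representatives are involved.

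The converse has a genuine gap, exactly at the step you flag as the main obstacle. It is not just missing detail.

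\emph{Value groups.} A field built from residue-ring data alone, such as the fraction field of your untilt $W(R_0^\flat)/(\xi)$ and its henselization, comes from a $p$-adically separated ring. So its value group lies in the convex hull $\Delta$ of $v(p)$. Moreover, since you want $\O_u/(p)$ semiperfect with $u(p)$ not minimal, every value in $[0,u(p))$ is $p$-divisible, so this value group is $p$-divisible. You cannot additionally prescribe it to be a ``common pure subgroup''. For example, take $K=K_0(\!(t)\!)$ with $K_0=\widehat{\Q_p(p^{1/p^\infty})}$ and $v$ the composite of the $t$-adic and $p$-adic valuations. This field is henselian with $\O_v/(p)\cong\O_{K_0}/(p)$ semiperfect, but $vK=\Z\times\Z[1/p]$ (lexicographic, $v(t)=(1,0)$) is not $p$-divisible. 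So no such $(F,u)$ satisfies $uF\preceq vK$. Repairing this means adjoining elements with values outside $\Delta$ and embedding the result compatibly into both $K^*$ and $L^*$. That requires an embedding lemma; saturation alone does not give it.

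\emph{Existential case.} Here the strategy has the wrong shape. Existential equivalence need not yield a common substructure existentially closed in both sides; already $(\Q,<)\equiv_\exists(\Z,<)$ admits none. So \cite[Theorem 5.1.10]{jahnke-kartas2023beyond} cannot be invoked this way. You must instead embed one saturated field into an elementary extension of the other.

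\emph{Tilt step.} Your claim that the $\LWitt$-theory pins down $\O^\flat$ together with $\ker\theta$ is only conjectured. Also, $\LWitt$ has no constants that could ``encode $\xi$''.

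The missing idea is the decomposition along $\Delta$ used in the paper.
\begin{itemize}
\item Let $v_0$ be the coarsening of $v$ with value group $vK/\Delta_K$. In an $\aleph_1$-saturated model its residue field is $K_0\cong\mathrm{Frac}(\varprojlim\O_v/(p^n))$. This field is $p$-adically complete with $\O_{\bar v}/(p)=\O_v/(p)$.
\item By \cref{prop:lift}, an $\LWitt$-isomorphism (from Keisler--Shelah) or $\LWitt$-embedding $f$ of the residue rings lifts to an isomorphism or embedding $(K_0,\bar v)\to(L_0,\bar w)$. The relations $\ker_n$ are exactly what make $\omega_n(\vec x)\mapsto\omega_n(f(\vec x))$ well-defined injective ring maps $\O_v/(p^n)\to\O_w/(p^n)$, compatible with the transition maps. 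One then passes to the inverse limit; no tilting is needed.
\item The pointed value-group hypothesis is used only to obtain $vK/\Delta_K\cong wL/\Delta_L$ (resp.\ an embedding).
\item Equicharacteristic-zero AKE for $(K,v_0)$, applied resplendently so that $\bar v$ is carried along, gives $(K,v)\equiv(L,w)$. The equicharacteristic-zero embedding lemma gives the existential version.
\end{itemize}
This also settles the regular case without reading off $\mathrm{Th}(vK,v(p))$ from $\O/(p)$ as you propose. Only $vK/\Delta_K$ enters, and under saturation and regularity it is nontrivial and divisible, so its theory is automatic.
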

    Recall that an ordered abelian group is called regular if and only if it is
    $\Loag$-elementarily equivalent to a subgroup of $(\mathbb{R},+,<)$, see
    \cite{Rob}.
   Since we show in \Cref{prop:ring} that the $\mathcal{L}_\mathrm{Witt}$-structure is $\Lring$-definable on $\O_v/(p)$ using parameters, it makes sense that 
    the extra structure
    plays no role for the principles involving $\preceq$ and $\preceq_\exists$ proven
    in \cite{jahnke-kartas2023beyond}.
    
    A particularly well-behaved class of deeply ramified fields is that of tame valued
    fields, which were introduced and studied by Kuhlmann \cite{kuhlmann2016algebra}.
    While for tame fields of positive characteristic, both $\mathrm{AKE}_\equiv$ and 
    $\mathrm{AKE}_{\equiv_\exists}$ principles hold down to residue field
    and value group (see \cite[Theorem 7.1]{kuhlmann2016algebra}), both fail in full generality in mixed
    characteristic by \cite[Theorem 1.5(b)]{anscombe-kuhlmann2016notes}, and
    even after fixing the algebraic part
     and the value of $p$  (see both \cite{ketelsenCompositionAxKochenErshov2026} and its appendix by Dittmann).
    In fact, even for mixed characteristic tame fields $(K,v)$, adding 
    further structure
    on $\mathcal{O}_v/(p)$ is necessary (see \cref{ex:tame}). 
    In particular, we prove new results also for the class
    of tame fields of mixed characteristic $(0,p)$.

    Moreover, we deduce a relative quantifier elimination result in roughly deeply ramified 
    henselian valued fields from our Main Theorem (\cref{thm:main}).
    This quantifier elimination is in fact a syntactic consequence of our structure 
    theory from \Cref{ssec:structure.theory}, but we feel it is worth recording: even for tame fields of mixed characteristic, it is the first instance of an elimination of field quantifiers, 
    down to sorts for the value group and residue ring. More precisely,
    we define a language $\mathcal{L}_\mathrm{QE}$ with sorts $\VF$ for the field,
    $R_1$ for the residue ring $\O/(p)$ and $\VG$ for the value group, and
    maps $v\colon \VF\to \VG$ and $\res_1\colon \VF\to R_1$,
    as well as predicates $\operatorname{AC}_n\subseteq \VF\times R_1^n$ for $n\geq 1$, and
     predicates $S_n\subseteq \VG\times R_1^n$ which are interpreted to reflect the
     $\LWitt$-structure.
    Our quantifier elimination result then reads as follows:
    \begin{thm*}[{{{Theorem~\ref{thm:qe}}}}]
        Let $\cT$ be the theory of roughly deeply ramified henselian valued fields of
        mixed characteristic $(0,p)$
        with a compatible system of angular component maps.
        Then $\cT$ eliminates \(\VF\)-quantifiers in the language $\LQE$.
    \end{thm*}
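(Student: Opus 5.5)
The plan is the standard route from an embedding/elementary-substructure criterion to quantifier elimination. We use the usual test: $\cT$ eliminates $\VF$-quantifiers in $\LQE$ if, whenever $M,N\models\cT$ are sufficiently saturated (with $N$ $|M|^+$-saturated), $A\subseteq M$ is a substructure, and $f\colon A\to N$ is an $\LQE$-embedding, every $a\in\VF(M)$ admits an extension of $f$ to the substructure generated by $A\cup\{a\}$. Since the $R_1$- and $\VG$-sorts are left unrestricted, we may first enlarge $A$ to contain all of $R_1(M)$ and $\VG(M)$. The embedding $f$ must then be elementary on these sorts in the induced structure, i.e.\ in $\LWitt$ on $R_1$ and in $\Loag\cup\{c\}$ on $\VG$, together with the $S_n$-predicates. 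This is exactly the kind of data appearing on the right-hand side of \cref{thm:main}.

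The key steps are as follows.
\begin{enumerate}
\item Replace the $\VF$-part of $A$ by a valued subfield $K_0$; the field generated is still determined by quantifier-free data.
\item Use the predicates $\AC_n$, which encode angular components (equivalently residues modulo $p$ of normalised elements $x/\text{(section)}$ into $R_1$), together with henselianity to extend $f$ to the henselisation, and then to a subfield whose residue ring $\O/(p)$ and value group are all of $R_1(M)$ and $\VG(M)$. This mimics the Pas-style argument. The compatible system of angular components supplies the lifts needed to realise residue-ring and value-group elements by field elements, and the $\AC_n$ for higher $n$ make the choices coherent with the $\LWitt$ (Witt-vector/tilt) structure from \cref{sec:structure}.
\item Once $K_0$ is \enquote{residually and valuatively full}, argue that $K_0\subseteq M$ and $f(K_0)\subseteq N$ are such that the embedding-based AKE principle (the $\preceq$ and $\preceq_\exists$ versions from \cite{jahnke-kartas2023beyond}, in the relative form developed in \Cref{ssec:structure.theory}) applies. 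This makes $\operatorname{qftp}(a/K_0)$ determined by the types of the $\res_1$ and $v$ data of polynomials in $a$. Saturation of $N$ then realises the image of $a$.
\end{enumerate}

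The main obstacle is step 2: extending $f$ so that all of $R_1$ and $\VG$ are realised, compatibly with the $S_n$ and $\AC_n$. Semiperfectness of $\O/(p)$ and the non-minimality of $v(p)$ (roughly deep ramification) should make these extensions immediate-free. Concretely, adjoining a lift of a residue element or of a value, chosen via the angular component, gives an extension whose quantifier-free type is fixed by the $\LQE$-data. The first thing to try is to reduce to the structure theorem of \Cref{ssec:structure.theory}, as the remark before the theorem suggests, so that the QE becomes purely syntactic. After that, the remaining immediate-type extension by $a$ is handled by the embedding lemma proved there.
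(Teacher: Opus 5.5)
\textbf{There is a genuine gap.} All the mathematical content of a relative quantifier elimination in mixed characteristic sits in your step~2. There you only assert that the argument \enquote{mimics the Pas-style argument} and that roughly deep ramification \enquote{should} make the extensions harmless. The Pas mechanics do not transfer.
\begin{enumerate}
\item Henselisation. Immediacy gives, for a Hensel root $x$, some $a\in A$ with $x=a(1+\epsilon)$ and $v(\epsilon)>0$, but not $v(\epsilon)\geq v(p^n)$. So $\ac_n(x)$ is not evidently determined by the data of $A$. When the convex hull of $v(p)$ has rank $>1$, even $\res_1(x)$ can lie outside $\res_1(\O_A)$. Showing that the unique valued-field extension of $f$ to $A^h$ respects $\AC_n$ and $\res_1$ needs Hensel-type approximation in the higher residue rings $\O/(p^n)$. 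That is precisely the information encoded by $\ker_n$, which your sketch never uses.
\item Lifting. Lifting an element of $R_1(M)$ is unlike lifting a residue-field element: $\res_1$ fixes the lift only modulo $p$, while $\AC_n$ sees it modulo $p^n$. The residue field extensions involved can be purely inseparable, and $p$th roots in $\O/(p)$ are not unique because Frobenius has a kernel.
\item Immediate extensions. Roughly deeply ramified fields do have immediate, even defect, algebraic extensions, so you cannot expect the extension steps to avoid them.
\item Step~3. It relies on tools not available in the form you need. The $\preceq$-AKE from \cite{jahnke-kartas2023beyond}, as used here, is an $\Lval$-statement without angular components or the $S_n$. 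Also, \Cref{ssec:structure.theory} contains no embedding lemma for immediate extensions; \cref{prop:lift} concerns only $p$-adically complete fields.
\item $\LWitt$-elementarity. In $\LQE$ the sort $R_1$ carries only $\Lring$. Your assumption that $f$ is $\LWitt$-elementary on $R_1$ therefore needs the observation $\ker_n(\vec{x};\vec{0})\iff S_n(v(p^n),\vec{x})$, which makes $\ker_n$ definable without $\VF$-quantifiers.
\end{enumerate}

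The paper avoids all of this by not proving a new extension criterion at all. It invokes the known elimination of $\VF$-quantifiers in the $\omega$-sorted language $\Lold$ \cite[Theorem~3.3.10]{rideau2022model}. That language has sorts $R_n=\O/(p^n)$ for every $n$ and maps $v,\ac_n,\res_n,\res_{n,m},s_n$. The paper notes that every $\LQE$-predicate is $\Lold$-definable. It then rewrites a $\VF$-quantifier-free $\Lold$-formula as an $\LQE$-formula:
\begin{itemize}
\item each $R_n$-variable becomes an $n$-tuple of $R_1$-variables, via the uniform quantifier-free interpretation $\map{n}$ of \cref{lem:interpretability};
\item the atomic formulas on $R_n$ are translated using $\ker_n$ (expressed through $S_n$), $S_n$ and $\AC_n$;
\item residues are handled by $\res_n(t(x))=\ac_n(t(x))s_n(v(t(x)))$.
\end{itemize}
Your closing remark about making the proof \enquote{purely syntactic} points the right way. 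But what one reduces to is this $\omega$-sorted elimination, not \cref{thm:main} or \cref{prop:lift}.
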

    The fact that one can eliminate field quantifiers in 
    henselian valued fields of mixed
    characteristic in an $\omega$-sorted language with sorts for value group
    and all residue rings can be deduced from the general AKE-type statement 
    \eqref{eq:AKE} discussed above (see \cite[Theorem 4.2]{Weis} for the case allowing only certain value groups and \cite[Theorem 3.3.10]{rideau2022model} for the general case). In fact, 
    our QE result and its proof are very close
    in spirit to that of \cite[Theorem 3.3.10]{rideau2022model}.

\section{Notation, Terminology, and Guiding Examples} \label{sec:not}
Given a valued field \( (K,v) \), we denote its value group by \(vK\), its valuation ring by \(\O_v\), with maximal ideal \(\mathfrak{m}_v\), and its residue field by \(Kv\). We denote the residue map \(\O_v\to Kv=\O_v/\m_v\) by \(\res_v\).
If $(K,v)$ has mixed characteristic $(0,p)$, we call the quotient ring 
$\O_v/p^n\O_v=:\O_v/(p^n)$ the \emph{$n$th residue ring} and often refer to $\O_v/(p)$ as the \emph{residue ring}. 
We say that $\O_v/(p)$ is \emph{semiperfect} if the Frobenius map $\Frob\colon x \mapsto x^p$ is surjective.

Unless stated otherwise, we consider valued fields 
in the three-sorted language $\mathcal{L}_\mathrm{val}$.
As usual, this consists of 
a sort $K$ for the field,
a sort $\Gamma\cup\{\infty\}$ for the value group $vK$ together with infinity,
and a sort $k$ for the residue field $Kv$.
The two field sorts each carry the language of rings
$\mathcal{L}_\mathrm{ring} = \{+,\cdot,-,0,1\}$ 
(where $+$ and $\cdot$ are binary and $-$ is unary),
and the value group sort carries $\mathcal{L}_\mathrm{oag} = \{0, +, <\}$.
In addition, we have symbols for
the valuation map $v \colon K \twoheadrightarrow \Gamma \cup \{ \infty \}$ and the residue map
$\res_v \colon K \twoheadrightarrow k$, which we interpret as the constant zero map outside the valuation ring $\mathcal{O}_v$. Note that the $\Lring$-structure on
$\O_v/(p)$ is interpretable without parameters in $\Lval$.

Following Kuhlmann and Rzepka \cite{KuhlBla}, we call a valued field
$(K,v)$
of mixed characteristic $(0,p)$ \emph{roughly deeply ramified} if $\mathcal{O}_v/(p)$
is semiperfect (i.e., the map $x \mapsto x^p$ is surjective) and $v(p)$ is not the smallest positive element of the value group. In particular, any deeply ramified
valued field (in the sense of \cite{GR}) of mixed characteristic is roughly deeply ramified (see \cite[Introduction]{KuhlBla} for details).

We now give an example that in order to prove AKE-type theorems for henselian
valued fields of mixed characteristic $(0,p)$ with $\O/(p)$ semiperfect
down to $\O/(p)$ and the pointed value group, extra structure is required:
\begin{examp} \label{ex:structure}
    Let $(K,v)$ be the completion of $ {\big(\mathbb{Q}_p(\sqrt[p-1]{p},\sqrt[p^\infty]{p}),v_p\big)}$ 
    and 
    $(L,w)$ be the completion of ${\big(\mathbb{Q}_p(\mu_{p^\infty}),v_p\big)}$. Then, $(K,v)$ and $(L,w)$ are well-known 
    examples of perfectoid fields
    whose tilts are both isomorphic to
    the completed perfect hull $(F,u)$ of $(\mathbb{F}_p(\!(t)\!),v_t)$.
    In fact, we have
    \begin{align*}\O_v/(p) & \overset{{p} \mapsto t}{\cong} \O_u/(t^{p-1}) \cong \mathbb{F}_p[t^{1/p^\infty}]/(t^{p-1}) \overset{t+1 \mapsto \mu_p}{\cong} 
    \mathcal{O}_w/(p)
    \end{align*}
    and $$(vK,v(p)) \cong (\mathbb{Z}[1/p],p-1)\cong (wL,w(p)).$$
    In particular,
    $$\O_v/(p) \equiv \O_w/(p) \textrm{ and }(vK,v(p))\equiv (wL,w(p))$$
    holds.
    However, as $\sqrt[p]{p} \in K$ and $\sqrt[p]{p} \notin L$ we obtain 
    $$\underbrace{(K,v) \not\equiv (L,w).}_{\textrm{ in }\Lval \textrm{ (or even $\Lring$)}}$$
    Thus, it is necessary to add further structure on $\O_v/(p)$ and/or $(vK,v(p))$
    to have any hope of an AKE-type principle. Moreover, as in this example 
    $vK$ and $wL$ are both
    regular, the $\Lring$-theory of $\O_v/(p)$ (resp.~$\O_w/(p)$) completely
    determines the $\Loag\cup\{c\}$-theory of $(vK,v(p))$ (respectively that of $(wL,w(p))$), see the proof of \cite[Theorem 5.1.2]{jahnke-kartas2023beyond}. 
    This motivates adding further structure on $\mathcal{O}/(p)$.
\end{examp}

    One could hope that when one considers tame fields of mixed characteristic,
    no further structure on $\O/(p)$ might be required. This is however not the case:
    \begin{examp}[Dittmann] \label{ex:tame}
        Let $p\neq 2$ and $(K_0,v_0)$ be an algebraic 
        extension of \((\mathbb{Q}_p,v_p)\) with residue field
        $\mathbb{F}_p$ and value group $\mathbb{Z}[1/p]$ which is tame, and let $$(K,v) \coloneqq \widehat{(K_0,v_0)}$$ denote its completion. 
        Take any $\alpha \in \mathbb{F}_p^\times\setminus (\mathbb{F}_p^\times)^2$,
        and let $a \in \mathbb{Z}$ with $a + p\mathbb{Z} = \alpha$.
        Consider the extensions $(K_1,v_1)=(K(\sqrt{p}),v)$ and 
        $(K_2,v_2)=(K(\sqrt{ap}),v)$ (where by an abuse of notation we also use $v$ to denote the unique extension of $v$ to the finite extensions of $K$). 
        Note that both
        $(K_1,v_1)$ and $(K_2,v_2)$ are tame and perfectoid.
        Moreover we have $(K_1,v) \not\equiv (K_2,v)$ since $K_1 \ni \sqrt{p} \notin K_2$
        as $\alpha \notin (\mathbb{F}_p^\times)^2$.

        We claim that $\mathcal{O}_{v_1}/(p) \cong \mathcal{O}_{v_2}/(p)$ holds.
        Indeed, we have $\mathcal{O}_v/(p) \cong \mathcal{O}_{v^\flat}/(t)$
        for some $t \in \mathfrak{m}_{v^\flat}$ with $v^\flat(t)=v(p)$,
        and $$\mathcal{O}_{v_i^\flat}/(t) \cong \mathcal{O}_{v_i^\flat}/(\alpha t) \cong \mathcal{O}_{v_i}/(p)$$
        for $i \in \{1,2\}$ (since $v_i^\flat(\alpha t)=v_i^\flat(t)=v_i(p)$ as $\alpha \in ({\O_{v_i^\flat}})^\times$). However, as we have 
        $$
        (K_1,v_1)^\flat \overset{t \mapsto \alpha t}{\cong} (K_2,v_2)^\flat,
        $$
        we obtain 
        $$
        \mathcal{O}_{v_1}/(p) \cong \mathcal{O}_{v_1^\flat}/(t) \cong \mathcal{O}_{v_2^\flat}/(\alpha t) \cong \mathcal{O}_{v_2}/(p)
        $$
        as desired. Thus, even for a tame field $(L,w)$, further structure on 
        $\mathcal{O}_w/(p)$
        is required in order to determine $\mathrm{Th}_{\Lval}(L,w)$ entirely.
    \end{examp}

    The $\LWitt$-structure we add on $\O_v/(p)$ in the next section will allow us in particular to recover the higher residue rings
    $\O_v/(p^n)$ (for every $n \geq 1$) from $\O_v/(p)$. As mentioned in the introduction, in the
    case of henselian finitely ramified fields, the theory of $(K,v)$ reduces to those of
    $vK$ and $\O_v/(p^N)$ for some sufficiently large $N$ by the work of Lee and Lee \cite{LeeLee}. In general, this is no longer true:

    \begin{examp}[Kartas]\label{ex:kartas}
        For any $\alpha \in 2^\omega$, Kartas constructs an algebraic extension 
        $K_\alpha \supseteq \mathbb{Q}_p$ such that $K_\alpha\not\equiv_{\Lring} K_\beta$ for any $\beta \in 2^\omega$ with $\alpha \neq \beta$ 
        (see \cite[Claim 2 in the proof of Proposition 3.6.9(b)]{Kar24}). 
        More precisely, $K_\alpha$ is inductively obtained as follows:
        Starting with $K_0=\mathbb{Q}_p$ and $\pi_0=p$, define for $n>0$
        $$\pi_{\alpha\restriction n}=((1+p)^{\alpha(n-1)} \pi_{\alpha\restriction (n-1)})^{1/p} \textrm{ and } K_{\alpha \restriction n}=K_{\alpha \restriction {(n-1)}}( \pi_{\alpha\restriction n}).$$ 
        Then set $K_\alpha:=\bigcup_{n \in \omega} K_{\alpha\restriction n}$.
        When $K_\alpha$ is equipped with the
        unique prolongation $v_\alpha$ of the $p$-adic valuation, then $(K_\alpha,v_\alpha)$ is
        deeply ramified \cite[Claim 3 in the proof of Proposition 3.6.9(b)]{Kar24}.
        Moreover, Kartas argues in the proof of \cite[Proposition 7.1.1]{Kar24}, that for
        any distinct
        $\alpha, \beta \in 2^\omega$ 
        such that $n\in \omega$ is minimal with $\alpha(n)\neq \beta(n)$
        one gets $\O_{v_\alpha}/(p^{n+1}) \cong \O_{v_\beta}/(p^{n+1})$
        but $\O_{v_\alpha}/(p^{n+2}) \not \cong \O_{v_\beta}/(p^{n+2})$ and even
        $\O_{v_\alpha}/(p^{n+2}) \not \equiv_{\Lring} \O_{v_\beta}/(p^{n+2})$. 
        Thus, there is no bound $N >0$ such that for all (roughly) deeply ramified
        henselian valued fields of mixed characteristic the $\Lring$-theory of the 
        $N$th residue ring $\O/(p^N)$
        already determines those of $\O/(p^m)$ for $m \geq N$.
    \end{examp}
    
    \section{Recovering the valued field from residue ring and value group}

    \subsection{The Witt-structure on \(\O_v/(p)\)}
    \label{sec:structure}
    Fix a prime \(p\).
    Given \(n\in \N\), we denote the \(n\)th Witt polynomial by \(W_n\in \Z[X_0,\ldots,X_n]\), see \cite[Section~6.2]{vdd}:
    \begin{align*}
    	W_n\coloneqq \sum_{i=0}^n p^iX_i^{p^{n-i}}
    \end{align*}

    We make repeated use of the following well-known fact which follows from the binomial theorem:
    \begin{lem}[Warm-up Lemma, cf.~{\cite[\S 4.II, Lemme 1]{serre1968corps}}]
        \label{lem:warm.up}
        Let $(K,v)$ be a valued field of mixed characteristic $(0,p)$. 
        Then
        \[x\equiv y \mod p\O_v \Longrightarrow x^{p^n}\equiv y^{p^n}\mod p^{n+1}\O_v.\]
    \end{lem}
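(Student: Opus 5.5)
Induction on $n$, with the binomial theorem doing the work at each step. It suffices to show the single-step claim: if $x\equiv y \mod p^{k}\O_v$ for some $k\geq 1$, then $x^p\equiv y^p \mod p^{k+1}\O_v$. Given this, the lemma follows by induction on $n$. The case $n=0$ is the hypothesis. If $x^{p^{n-1}}\equiv y^{p^{n-1}} \mod p^{n}\O_v$, apply the single-step claim to $x^{p^{n-1}}$ and $y^{p^{n-1}}$ with $k=n$, which gives $x^{p^n}\equiv y^{p^n} \mod p^{n+1}\O_v$.

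For the single-step claim, we may assume $x,y\in\O_v$; in the intended application $x,y$ lie in $\O_v$, and congruences modulo $p\O_v$ are understood inside $\O_v$. Write $x=y+p^k z$ with $z\in\O_v$ and expand
\[
x^p=y^p+\sum_{i=1}^{p-1}\binom{p}{i}y^{p-i}p^{ki}z^i+p^{kp}z^p .
\]
For $1\leq i\leq p-1$ the prime $p$ divides $\binom{p}{i}$, so each middle term lies in $p^{1+ki}\O_v\subseteq p^{k+1}\O_v$. The last term lies in $p^{kp}\O_v$, and $kp\geq 2k\geq k+1$ since $p\geq 2$ and $k\geq 1$. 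Hence $x^p-y^p\in p^{k+1}\O_v$.

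There is no real obstacle. The only point needing care is the last term: one must check that $kp\geq k+1$, which uses $k\geq 1$. That is why the induction is started from a congruence modulo $p$ and not modulo $p^0$.
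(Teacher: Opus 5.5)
Your proof is correct and is essentially the argument the paper intends: the paper gives no proof of its own, only saying the lemma follows from the binomial theorem and citing Serre, and your single-step claim ($x\equiv y \bmod p^{k}\O_v$ with $k\geq 1$ implies $x^p\equiv y^p \bmod p^{k+1}\O_v$) iterated by induction is exactly that standard argument. You are also right to restrict to $x,y\in\O_v$: the paper only applies the lemma there, and the statement genuinely needs it, since for $v(y)<0$ the term $\binom{p}{1}y^{p-1}pz$ can have valuation below $2v(p)$.
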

    Given a valued field $(K,v)$ of mixed characteristic $(0,p)$, we define for $n \geq 1$ the map
    \[
    \begin{array}{rrcl}
    \map{n}\colon &\left(\O_v/(p)\right)^n & \longrightarrow & \O_v/(p^n)\\
    &(x_0+p\O_v,\ldots,x_{n-1}+p\O_v) & \longmapsto & W_{n-1}(x_0,\ldots,x_{n-1})+p^n\O_v.
    \end{array}
    \]
    This is well-defined by \cref{lem:warm.up}.
    The kernel of this map is 
    \[
    \ker(\map{n})\coloneqq \left\{(\vec{x};\vec{y})\in \left(\O_v/(p)\right)^n\times \left(\O_v/(p)\right)^n : \map{n}(\vec{x})=\map{n}(\vec{y})\right\}.
    \]

    \begin{defi}[\(\LWitt\)]
        We define \(\LWitt\coloneqq \Lring \cup \{ \ker_n \colon n\in \N \}\), where \(\ker_n\) is a \(2n\)-ary relation symbol.

        Given a valued field \( (K,v) \) of mixed characteristic $(0,p)$ we define the \(\LWitt\)-structure on the ring \(\O_v/(p)\) as follows:
        \begin{itemize}
            \item the symbols in \(\Lring\) are interpreted by the usual ring-structure on \(\O_v/(p)\).
            \item for \(n\in \N\), \(\ker_n\) is interpreted by the set \(\ker(\map{n})\).
        \end{itemize}
    \end{defi}

    \begin{rem}
        Note that the $n$-ary relation $\ker_n(\cdot;\vec{0})$ $\emptyset$-defines the $2n$-ary relation $\ker_n$, and vice versa, see \cref{rem:ker0}. 
        For ease of notation, we 
        work with the \(2n\)-ary relation $\ker_n$ throughout. \label{rem:kern}
    \end{rem}
    
    Note: \(\ker_1\) is interpreted as the kernel of \(\map{1}=\id\colon \O_v/(p) \to \O_v/(p)\), which is just the equality relation.

    \subsection{Structure theory}\label{ssec:structure.theory}
    In this section, we show that the $\LWitt$-structure for $p$-adically complete valued fields of mixed characteristic with $\O/(p)$ semiperfect determines the 
    valued field up to isomorphism, as well as an appropriate version for embeddings.
    We then deduce our main results from these facts.
    
    The following lemma gives an explicit interpretation of the higher residue
    rings in the $\LWitt$-structure $\O/(p)$. Recall that an interpretation of one
    structure in another is called quantifier-free if the defining formula for each unnested atomic formula is quantifier-free \cite[Sec.~5.4(a)]{Hod}.
    A version of the following lemma is proven in case $\O/(p)$ is \emph{perfect} 
    \cite[Lemma 3.4.5]{Kar24}, where the higher Witt rings are interpretable in $\Lring$. We generalize this, working under the weaker
    assumption of semiperfectness where $\LWitt$ (rather than $\Lring$) is necessary (cf.~\cref{ex:kartas}). When $\O/(p)$ is
    perfect, then $\ker_n$ is just the equality relation, thus $\Lring$ suffices.
    \begin{lem}\label{lem:interpretability}
        Let \(p\) be a prime. Consider the class of roughly deeply ramified valued fields of mixed characteristic \( (0,p) \).
        Then \(\O/(p^n)\) (with \(\Lring\)) and the maps $\res_{n,m}\colon \O/(p^n)\to \O/(p^m)$ for $n \geq m \geq 1$ are uniformly quantifier-free interpretable in 
        \(\O/(p)\) (with \(\LWitt\)).
    \end{lem}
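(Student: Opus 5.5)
The plan is to interpret $\O/(p^n)$ as the quotient of $(\O/(p))^n$ by the $\emptyset$-definable equivalence relation $\ker_n$, with $\map{n}$ as the coordinate map. Three things must be checked:
\begin{enumerate}
\item $\map{n}$ is surjective, so the interpreted domain is all of $\O/(p^n)$.
\item The graphs of $+$, $\cdot$, $-$, $0$, $1$ on $\O/(p^n)$ pull back to quantifier-free $\LWitt$-definable relations on $(\O/(p))^n$.
\item The maps $\res_{n,m}$ pull back likewise.
\end{enumerate}
All formulas must be independent of the particular field, which gives uniformity.

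\emph{Surjectivity.} I would argue by induction on $n$ using semiperfectness. Given $a\in\O$, suppose we have found $x_0,\ldots,x_{k-1}\in\O$ with $a\equiv W_{k-1}(x_0,\ldots,x_{k-1})^{\ast}+p^k b \pmod{p^n}$ in a suitable form. To make this precise, note the identity $W_{n-1}(x_0,\ldots,x_{n-1})=\sum_{i} p^i x_i^{p^{n-1-i}}$. Semiperfectness of $\O/(p)$ gives, for any $c\in\O$ and any $j$, an element $y$ with $y^{p^j}\equiv c\pmod p$; iterating the Frobenius surjection handles the case $j>1$.

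Write $a\equiv x_0^{p^{n-1}}\pmod p$ and $a-x_0^{p^{n-1}}=pa_1$. Then choose $x_1$ with $x_1^{p^{n-2}}\equiv a_1\pmod p$, giving $a-x_0^{p^{n-1}}-px_1^{p^{n-2}}\in p^2\O$, and continue. After $n$ steps $a\equiv W_{n-1}(x_0,\ldots,x_{n-1})\pmod{p^n}$. This step uses only that the class of $x_i$ mod $p$ matters, which is \cref{lem:warm.up}.

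\emph{Ring operations.} Addition is the delicate point, since Witt-vector addition is not coordinatewise. However, I do not need an explicit formula. The relation $\map{n}(\vec x)+\map{n}(\vec y)=\map{n}(\vec z)$ must be expressed quantifier-freely. The trick is to use the $p$-adic digits: $\map{n}(\vec x)+\map{n}(\vec y)=\sum_i p^i(x_i^{p^{n-1-i}}+y_i^{p^{n-1-i}})$. This is not itself of the form $\map{n}(\cdot)$ with coordinates given by terms, because $x^{p^k}+y^{p^k}$ is not a $p^k$-th power of a term.

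So instead I would use the hint in \cref{rem:kern}, treating $\ker_n$ as determined by its instances with larger arity. Consider $\map{2n}$-type combinations, or more simply note that $\ker_n$ is a relation on tuples. The key observation is that the relation $\map{n}(\vec x)+\map{n}(\vec y)=\map{n}(\vec z)$ is equivalent to a single $\ker_{N}$-instance for a suitable $N$ and suitably arranged arguments of ring terms. To obtain it, I would use the Witt addition polynomials $S_i\in\Z[X_0,\ldots,X_i,Y_0,\ldots,Y_i]$, which satisfy $W_{n-1}(\vec S)=W_{n-1}(\vec X)+W_{n-1}(\vec Y)$ identically over $\Z$. Hence $\map{n}(\vec x)+\map{n}(\vec y)=\map{n}(S_0(\vec x,\vec y),\ldots,S_{n-1}(\vec x,\vec y))$. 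This is well defined modulo $p$ by \cref{lem:warm.up}, since the identity holds in $\O$ for any lifts.

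The graph of addition is therefore $\ker_n(\vec S(\vec x,\vec y);\vec z)$, which is quantifier-free. Multiplication is handled the same way with the Witt multiplication polynomials $P_i$. The constants are $0=\map{n}(0,\ldots,0)$ and $1=\map{n}(1,0,\ldots,0)$, and negation is obtained via the graph of addition with $0$. Equality is $\ker_n$ itself.

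\emph{Transition maps.} Since $W_{n-1}(\vec x)=\sum_i p^ix_i^{p^{n-1-i}}\equiv\sum_{i<m}p^i x_i^{p^{n-1-i}}=W_{m-1}(x_0^{p^{n-m}},\ldots,x_{m-1}^{p^{n-m}})\pmod{p^m}$, the map $\res_{n,m}$ is induced by the term map $\vec x\mapsto(x_0^{p^{n-m}},\ldots,x_{m-1}^{p^{n-m}})$. Its graph is the quantifier-free formula $\ker_m(x_0^{p^{n-m}},\ldots,x_{m-1}^{p^{n-m}};\vec y)$.

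The main obstacle I expect is addition and multiplication. Once one realizes the Witt polynomial identities over $\Z$ transfer because the coordinates only matter modulo $p$, everything becomes a direct check. Surjectivity is the only place where semiperfectness and the ramification hypothesis (only semiperfectness really) enter.
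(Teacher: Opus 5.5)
Your proposal is correct and follows essentially the same route as the paper: surjectivity of $\map{n}$ by the same digit-by-digit induction using semiperfectness, the graphs of $+$ and $\cdot$ via $\ker_n$ applied to the Witt addition and multiplication polynomials $S_i$, $P_i$, and $\res_{n,m}$ via the term map $\vec x\mapsto (x_0^{p^{n-m}},\ldots,x_{m-1}^{p^{n-m}})$. There is one minor difference: you handle negation through the graph of addition, $\ker_n(\vec S(\vec x,\vec y);\vec 0)$, whereas the paper writes down an explicit Witt representative of $-1$ (with a separate case for $p=2$); both are quantifier-free, and your detour about $p$-adic digits and higher-arity $\ker_N$ is unnecessary and can be dropped.
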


    \begin{proof}
        We show, for each roughly deeply ramified valued field \( (K,v) \) of mixed characteristic \((0,p)\), that \(\O_v/(p^n)\) (with \(\Lring\)) is interpretable in \(\O_v/(p)\) (with \(\LWitt\)), where the defining formulas do not depend on the choice of \( (K,v) \).

        We first claim that for any such field \( (K,v) \), 
        \[
        \begin{array}{rrcl}
        \map{n}\colon &\left(\O_v/(p)\right)^n & \longrightarrow & \O_v/(p^n)\\
        &(x_0+p\O_v,\ldots,x_{n-1}+p\O_v) & \longmapsto & W_{n-1}(x_0,\ldots,x_{n-1})+p^n\O_v.
        \end{array}
        \]
        is a well-defined surjective map.
        The fact that $\map{n}$ is well-defined follows from \Cref{lem:warm.up}.
        To see that $\map{n}$ is surjective, take $y\in \O_v$.
        By induction, we construct $x_0, \ldots, x_{n-1}\in \O_v$ such that 
        \[
        \map{n} (x_0 + p\O_v, \dotsc, x_{n-1}+ p\O_v) = y+p^n\O_v, \]
        more precisely, such that for $i= 0 \ldots, n$ we have
        \[
        y - x_0^{p^{n-1}} - px_1^{p^{n-2}} - \ldots - p^{i-1}x_{i-1}^{p^{n-i}}\in p^{i}\O_v.
        \]
 
        (For $i=0$ this statement is vacuous.)
        Suppose that $x_0, \ldots, x_{i-1}$ have been constructed, for some $i \in \{0, \ldots, n-1\}$.
        Write 
        \[
        y - x_0^{p^{n-1}} - px_1^{p^{n-2}} - \ldots - p^{i-1}x_{i-1}^{p^{n-i}} = p^{i}z
        \]
        for some $z\in \O_v$.
        Since $\O_v/(p)$ is semiperfect, there exists $x_{i}\in \O_v$ such that $x_{i}^{p^{n-1-i}} \equiv z\bmod p\O_v$, so
        \[
        y - x_0^{p^{n-1}} - px_1^{p^{n-2}} - \ldots - p^{i-1}x_{i-1}^{p^{n-i}}- p^{i}x_{i}^{p^{n-1-i}}\in p^{i+1}\O_v,
        \]
        as desired.
        
        We need to argue that for every unnested atomic formula in \(\Lring\), there is a quantifier-free defining formula in \(\LWitt\).
        In fact, it is enough to check this for \enquote{\(X=Y\)}, \enquote{\(X+Y=Z\)} and \enquote{\(X\cdot Y=Z\)}: one has \(W_n(0,\ldots,0)=0\), \(W_n(1,0,\ldots,0)=1\) and \(W_n(-1,0,\ldots,0)=-1\) if \(p\) is odd and \(W_n(1,\ldots,1)=-1\) if \(p=2\), see \cite[Exercise on page 135]{vdd}. This means that \(0\) and \(1\) are uniformly \( \emptydefinition \)-definable without quantifiers, and similarly for subtraction since it is multiplying by \(-1\).

        By definition of the \(\LWitt\)-structure on \(\O_v/(p)\), the defining formula for \enquote{\(X=Y\)} is just \(\ker_n(\vec{X};\vec{Y})\).

        By \cite[Lemma~6.5]{vdd}, there are \(\Z\)-polynomials \[S_0,S_1,\ldots,P_0,P_1,\ldots\in \Z[X_0,X_1,\ldots Y_0,Y_1,\ldots]\] (where for each \(i\) we have \(S_i,P_i\in \Z[X_0,\ldots,X_i,Y_0,\ldots,Y_i]\)) such that 
        the defining formula for \enquote{\(X+Y=Z\)} is given by 
        \[
        \ker_n(S_0(\vec{X},\vec{Y}),\ldots,S_{n-1}(\vec{X},\vec{Y});\vec{Z})
        \] 
        and the defining formula for \enquote{\(X\cdot Y=Z\)} is given by
        \[
        \ker_n(P_0(\vec{X},\vec{Y}),\ldots,P_{n-1}(\vec{X},\vec{Y});\vec{Z}).
        \]
        Note that the defining formulas just depend on \(n\) and \(p\) and not on the choice of \( (K,v) \).
        Thus \(\O_v/(p^n)\) is uniformly interpretable in the 
        $\LWitt$-structure \(\O_v/(p)\).

        To see that $\res_{n, m}\colon \O_v/(p^n)\to \O_v/(p^m)$ is $\LWitt$-interpretable for $n\geq m$, we simply note that
        \[
        \res_{n,m}(\omega_n(x_0, \ldots, x_{n-1})) = \omega_m(x_0^{p^{n-m}}, \ldots, x_{m-1}^{p^{n-m}})
        \]
        and the map $(\O_v/(p))^n\to (\O_v/(p))^m,\ (x_0, \ldots, x_{n-1})\mapsto (x_0^{p^{n-m}}, \ldots, x_{m-1}^{p^{n-m}})$ is clearly quantifier-free $\LWitt$-definable.
    \end{proof}

    \begin{rem}
        \begin{enumerate}
            \item \label[rem]{rem:kostas} Note that the proof of \cref{lem:interpretability} also goes through when only assuming that \(\O/(p)\) is semiperfect, i.e. without assuming that \(v(p)\) is not minimal positive. 
            Indeed, if \(v(p)\) is minimal positive, then \(\O/(p)\) is a perfect ring and our statement is exactly \cite[Lemma~3.4.5.]{Kar24} 
            where the simpler language without the \(\ker_n\) suffices, since $\ker_n$ is the identity. 
            \item \label[rem]{rem:kernenough} It follows immediately from the proof of \cref{lem:interpretability} that for fixed $n$, the reduct language $\Lring \cup \{\ker_1, \dotsc, \ker_n\}$ suffices to interpret quantifier-freely the rings 
            $\O_v/(p^m)$ and the maps $\res_{m,l}$ for $l\leq m \leq n$. 
            \item \label[rem]{rem:ker0} Using the Witt vector polynomials $S_i, P_i \in \mathbb{Z}[X_0, \dots, X_i, Y_0, \dots, Y_i]$, the $n$-ary relation 
            $\ker_n(\cdot; \vec{0}) \subseteq (\O_v/(p))^n$ $\emptyset$-defines the $2n$-ary
            relation $\ker_n$ (see \cref{rem:kern}). Indeed, for $(\vec{x},\vec{y})\in \left(\O_v/(p)\right)^n\times \left(\O_v/(p)\right)^n $, we have the equivalence
            $$\ker_n(\vec{x};\vec{y}) \Longleftrightarrow \ker_n(S_0(x_0,P_0(-1,y_0)),\dots,S_{n-1}(\vec{x},P_{n-1}(\vec{-1},\vec{y})); \vec{0})$$
            where we write $\vec{-1}$ for the $n$-tuple $(-1, 0, \dotsc, 0)$ (if $p\neq 2$) and $(1,1,\dotsc,1)$ if $p=2$.
        \end{enumerate}  
    \end{rem}
    
    We now give a structure theorem for 
    $p$-adically complete deeply ramified valued fields, i.e.\ deeply ramified fields
    $(K,v)$ with $\O_v \cong \varprojlim \O_v/(p^n)$.
    \begin{prop} \label{prop:lift}
        Let $(K,v)$ and $(L,w)$ be $p$-adically complete valued fields of mixed characteristic $(0,p)$
        such that $\O_v/(p)$ and $\O_w/(p)$ are semiperfect. 
        Then any embedding (respectively isomorphism)  
        $\O_v/(p) \to \O_w/(p)$ as $\LWitt$-structures is induced by a
        unique embedding (respectively
        isomorphism) $(K,v) \to (L,w).$
    \end{prop}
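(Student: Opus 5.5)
Given an $\LWitt$-embedding $\varphi\colon \O_v/(p)\to\O_w/(p)$, the plan is to pass through the higher residue rings by means of \cref{lem:interpretability} and then take the inverse limit. Since $\O_v/(p^n)$ is interpreted in $\O_v/(p)$ by quantifier-free formulas that are uniform in the field, and $\varphi$ preserves all $\ker_n$ and the ring operations, $\varphi$ induces for every $n$ a map
\[
\varphi_n\colon \O_v/(p^n)\to\O_w/(p^n),\qquad \map{n}(x_0,\dots,x_{n-1})\mapsto \map{n}(\varphi(x_0),\dots,\varphi(x_{n-1})).
\]

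Each $\varphi_n$ has the properties we need.
\begin{itemize}
\item It is well defined and injective because $\ker_n$ is preserved in both directions; an embedding of structures preserves atomic formulas and their negations.
\item It is a ring homomorphism because the defining formulas for $+$ and $\cdot$ are given by the Witt polynomials $S_i,P_i$, which $\varphi$ respects.
\item It is surjective onto $\O_w/(p^n)$ whenever $\varphi$ is an isomorphism, using the surjectivity of $\map{n}$ on both sides.
\item The maps are compatible with the transition maps $\res_{n,m}$, by the explicit formula for $\res_{n,m}$ in \cref{lem:interpretability} and the fact that $\varphi$ commutes with Frobenius.
\end{itemize}
By $p$-adic completeness, $\O_v\cong\varprojlim\O_v/(p^n)$, and similarly for $\O_w$. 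Taking the limit therefore gives an injective ring homomorphism $\Phi\colon\O_v\to\O_w$, which is bijective if $\varphi$ is. It induces $\varphi$ modulo $p$, because $\varphi_1=\varphi$ since $\map{1}=\id$.

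Next we extend $\Phi$ to fields. Since $K=\mathrm{Frac}(\O_v)$, $\Phi$ extends to a field embedding $K\to L$. We must check that it is an embedding of valued fields, i.e.\ that $\Phi^{-1}(\O_w)=\O_v$.

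\begin{itemize}
\item First, $\Phi(p)=p$, since $\Phi$ is a ring homomorphism.
\item Second, for $x\in\O_v$ we have $x\in\m_v$ if and only if $x^{p^k}\in p\O_v$ for some $k$. For the forward direction: if $v(x)>0$, then $p^kv(x)\ge v(p)$ for large $k$, because $v(p)$ lies in the convex hull of $\mathbb{Z}v(x)$. This holds since $\O_v$ is $p$-adically separated, so the rank-one part containing $v(p)$ is everything relevant. This step needs care, and I would argue it via completeness. The condition is first order in $\O/(p)$ (nilpotence), and $\varphi$ is injective, so $\Phi$ maps units to units and $\m_v$ into $\m_w$.
\item Hence if $x\in K\setminus\O_v$, then $x^{-1}\in\m_v$, so $\Phi(x)^{-1}\in\m_w$ and $\Phi(x)\notin\O_w$.
\end{itemize}
So $\Phi$ is a valued field embedding, and in the isomorphism case its inverse arises in the same way.

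For uniqueness, suppose $\Psi\colon(K,v)\to(L,w)$ also induces $\varphi$. Then $\Psi$ maps $\O_v$ into $\O_w$ and $p$ to $p$, so it induces maps $\O_v/(p^n)\to\O_w/(p^n)$. The value of such a map on $\map{n}(\vec x)$ is $W_{n-1}$ evaluated at lifts of $\varphi(x_i)$, so it equals $\varphi_n$. Hence $\Psi$ agrees with $\Phi$ modulo every $p^n$, and therefore $\Psi=\Phi$ on $\O_w$ by $p$-adic separatedness, and thus on $K$.

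The main obstacle I expect is the valuation-compatibility step: showing that the limit ring map respects the valuation rather than being merely a ring embedding. In particular we must handle elements whose value is infinitesimal relative to $v(p)$; $p$-adic completeness, which forces $v(p)$ to be cofinal in the convex subgroup it generates, has to be used carefully there.
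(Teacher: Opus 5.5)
Your construction of the $\varphi_n$, the passage to $\Phi\colon\O_v\to\O_w$ via $p$-adic completeness, and your uniqueness argument all agree with the paper; the uniqueness argument is in fact more explicit, though it should end with $\Psi=\Phi$ on $\O_v$, not $\O_w$.

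\textbf{The gap.} It is exactly the step you flagged. The claim that $x\in\m_v$ iff $x^{p^k}\in p\O_v$ for some $k$ is false in general, and $p$-adic completeness does not rescue it. Separatedness $\bigcap_n p^n\O_v=0$ only excludes values infinitely \emph{larger} than $v(p)$, i.e.\ it forces $v(p)$ to be cofinal in $vK$. It says nothing about positive values $\gamma$ with $n\gamma<v(p)$ for all $n$. Such values occur precisely where the proposition is used. In the proof of Theorem~\ref{thm:main}, $(K_0,\overline v)$ is $p$-adically complete with value group $\Delta_K$. When $v(p)$ is not minimal positive, semiperfectness yields values below $v(p)/p^n$ for every $n$, and $\aleph_1$-saturation then produces $\gamma\in\Delta_K$ with $0<n\gamma<v(p)$ for all $n$. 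An element of such a value lies in $\m_{\overline v}$ but is not nilpotent modulo $p$. So the fact that $\varphi$ preserves nilpotents tells you nothing about whether its image $\Phi(x)$ is a unit of $\O_w$.

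\textbf{The fix.} Use the injectivity of $\varphi$, not nilpotence. Let $x\in\m_v$.
\begin{itemize}
\item If $v(x)>v(p)$, then $x\in p\O_v$, so $\Phi(x)\in p\O_w\subseteq\m_w$.
\item If $0<v(x)\le v(p)$, then $p/x\in\O_v\setminus p\O_v$. Were $\Phi(x)\in\O_w^\times$, we would get $\Phi(p/x)=p\,\Phi(x)^{-1}\in p\O_w$. That is, $\varphi(p/x+p\O_v)=0$, contradicting injectivity.
\end{itemize}
Hence $\Phi(\m_v)\subseteq\m_w$, and the rest of your argument goes through.

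The paper packages the same observation structurally. Put $\O_\nu=\Phi(\O_v)$ and $\O_\omega=\O_w\cap\Phi(K)$; both are valuation rings of $\Phi(K)$ with $\O_\nu\subseteq\O_\omega$, so $\O_\omega$ is a coarsening of $\O_\nu$. The kernel of $\O_\nu/(p)\to\O_\omega/(p)$ is $p\O_\omega/p\O_\nu\cong\O_\omega/\O_\nu$. Since $\varphi$ is the composite $\O_\nu/(p)\to\O_\omega/(p)\to\O_w/(p)$ and is injective, this kernel vanishes, giving $\O_\nu=\O_\omega$.
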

    \begin{proof}
        Let 
        \[
        \varphi\colon \left(\O_v/(p),\ldots\right) \hookrightarrow \left(\O_w/(p),\ldots\right)
        \]
        be an embedding of \(\LWitt\)-structures.
        For each \(n\in\N\), $\varphi$ induces a unique map \(\varphi_n\) as follows:
        \[
        \begin{array}{ccc}
            \varphi_n\colon \O_v/(p^n) & \hookrightarrow & \O_w/(p^n) \\
            \map{n}(\vec{x}) & \mapsto & \map{n}(\varphi(\vec{x}))
        \end{array}
        \]
        for any \(\vec{x}\in (\O_v/(p))^n\).
        As \(\varphi\) is an \(\LWitt\)-embedding, we have
        \[
        \ker_n(\varphi(\vec{x});\varphi(\vec{y})) \Leftrightarrow \ker_n(\vec{x};\vec{y}).
        \]
        Since \(\ker_n\) is interpreted as the kernel of \(\map{n}\), this shows that \(\varphi_n\) is well-defined and injective.
        
        Moreover, \(\varphi_n\) is a ring homomorphism.
        Indeed, since we have the uniform quantifier-free interpretation from \cref{lem:interpretability} (see also \cref{rem:kostas}), 
        there is a quantifier-free \(\LWitt\)-formula \(\chi_{+,n}\) such that
        \[
        \map{n}(\vec{x})+\map{n}(\vec{y})=\map{n}(\vec{z}) \text{ in \(\O_v/(p^n)\)} \iff \O_v/(p) \models \chi_{+,n}(\vec{x},\vec{y},\vec{z})
        \]
        for all \(\vec{x},\vec{y},\vec{z}\in (\O_v/(p))^n\), and 
        \[
        \map{n}(\vec{x})+\map{n}(\vec{y})=\map{n}(\vec{z}) \text{ in \(\O_w/(p^n)\)} \iff \O_w/(p) \models \chi_{+,n}(\vec{x},\vec{y},\vec{z})
        \]
        for all \(\vec{x},\vec{y},\vec{z}\in (\O_w/(p))^n\).
        Thus, if \(\vec{x},\vec{y}\in(\O_v/(p))^n\), and \(\map{n}(\vec{x})+\map{n}(\vec{y})=\map{n}(\vec{z}) \text{ in \(\O_v/(p^n)\)}\) for some \(\vec{z}\in (\O_v/(p))^n\), then by the above we have 
        \(\O_v/(p)\models \chi_{+,n}(\vec{x},\vec{y},\vec{z})\), and since \(\varphi\) is an \(\LWitt\)-embedding and \(\chi_{+,n}\) is quantifier-free, we get \(\O_w/(p)\models \chi_{+,n}(\varphi(\vec{x}),\varphi(\vec{y}),\varphi(\vec{z}))\).
        Now again by the above and applying the definition of \(\varphi_n\), we get
        \begin{align*}
        \varphi_n(\map{n}(\vec{x}))+\varphi_n(\map{n}(\vec{y}))&=\map{n}(\varphi(\vec{x}))+\map{n}(\varphi(\vec{y}))=\map{n}(\varphi(\vec{z}))\\&=\varphi_n(\map{n}(\vec{z}))=\varphi_n(\map{n}(\vec{x})+\map{n}(\vec{y}))
        \end{align*}
        Similarly, using the defining formulas for \enquote{\(X\cdot Y=Z\)}, \enquote{\(X=1\)}, and \enquote{\(\res_{n,m}(X)=Y\)}, one proves in the same way that 
        \begin{align*}
        	\varphi_n(\map{n}(\vec{x})) \cdot \varphi_n(\map{n}(\vec{y})) &= \varphi_n(\map{n}(\vec{x}) \cdot \map{n}(\vec{y})),\\
        	\varphi_n(1)&=1, \text{ and}\\
            \varphi_m(\res_{n,m}(\map{n}(\vec{x})))&=\res_{n,m}(\varphi_n(\map{n}(\vec{x}))).
        \end{align*}
        Thus, the \(\varphi_n\) are ring embeddings compatible with the inverse system maps, and by the universal property of inverse systems we obtain an embedding
        \[\Phi \colon \O_v\cong\varprojlim_n\O_v/(p^n)\hookrightarrow \varprojlim_n\O_w/(p^n)\cong\O_w.\]
        Note that if two $\LWitt$-embeddings $\varphi$ and $\psi$ induce the same
        maps $\varphi_n$ respectively $\psi_n$ for all $n$, then $\Phi$ 
        and $\Psi$ (the embedding of valuation rings induced by $\psi$) coincide by the universal property of inverse systems.
        
        Now $\Phi$ induces a unique embedding of fraction fields 
        $$K=\mathrm{Frac}(\O_v) \hookrightarrow L=\mathrm{Frac}(\O_w).$$ 
        Moreover, $\O_\nu:=\Phi(\O_v) \subseteq \Phi(K)$ is a refinement of 
        $\O_\omega = \O_w \cap \Phi(K)$. 
        Note that, for any refinement $\O_\nu \subseteq \O_\omega$, the induced map $\O_\nu/(p) \to \O_\omega/(p)$
        has kernel $$(p\O_\omega \cap \O_\nu)/p\O_\nu = p\O_\omega/p\O_\nu \cong \O_\omega/\O_\nu$$ where the first equality follows from 
        $p\O_\omega \subseteq \mathfrak{m}_\omega \subseteq \mathfrak{m}_\nu \subseteq  \O_\nu$. As the injectivity of the composition
        $\O_\nu/(p) \rightarrow \O_\omega/(p) \rightarrow \O_w/(p)$ implies 
        that of $\O_\nu/(p) \rightarrow \O_\omega/(p)$, we conclude
        $\O_\nu=\O_\omega$.
        
        Hence, $\Phi$ is in fact
        an embedding $\Phi\colon (K,v) \hookrightarrow (L,w)$ of valued fields,
        and is the unique such that induces $\varphi$.
        
        Finally, if $\varphi$ is an isomorphism, then \(\varphi\) and its inverse are $\LWitt$-embeddings. They lift to embeddings \(\Phi\) and \(\Phi'\) between \( (K,v) \) and \( (L,w) \) whose composition (in both ways) is a lift of the identity map of \(\O_v/(p)\) respectively \(\O_w/(p)\). 
        By uniqueness, there is only one lift of the identity map, namely the identity. 
        Thus, \(\Phi'=\Phi^{-1}\) and \(\Phi\) is an isomorphism.
    \end{proof}

    We can now prove our main theorem:
    \begin{thm}
        \label{thm:main}
        Let $(K,v)$ and $(L,w)$ be henselian valued fields of mixed characteristic 
        $(0,p)$ 
        such that $\O_v/(p)$ and $\O_w/(p)$ are semiperfect. Then, we have
        $$\underbrace{(K,v) \equiv (L,w)}_{\text{in }\mathcal{L}_\mathrm{val}} \Longleftrightarrow \underbrace{\mathcal{O}_v/(p) \equiv \mathcal{O}_w/(p)}_{\text{in }\mathcal{L}_{\mathrm{Witt}}}\textrm{ and } \underbrace{(vK,v(p)) 
        \equiv (wL,w(p))}_{\text{in }\mathcal{L}_{\mathrm{oag}}\cup\{c\}},$$
        and 
        $$\underbrace{(K,v) \equiv_\exists (L,w)}_{\text{in }\mathcal{L}_\mathrm{val}} \Longleftrightarrow \underbrace{\mathcal{O}_v/(p) \equiv_\exists \mathcal{O}_w/(p)}_{\text{in }\mathcal{L}_{\mathrm{Witt}}}\textrm{ and } \underbrace{(vK,v(p)) \equiv_\exists (wL,w(p))}_{\text{in }\mathcal{L}_{\mathrm{oag}}\cup\{c\}},$$
        where $c$ is a constant symbol interpreted as the value of $p$.
        Moreover, when $vK$ and $wL$ are regular, the value group data on the right hand side can be omitted in both equivalences.
    \end{thm}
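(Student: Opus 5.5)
The plan is to reduce to $p$-adically complete fields, where \cref{prop:lift} applies, and to use the fact that the relevant data are preserved under suitable elementary extensions and completions.

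\textbf{Forward direction.} The implications from left to right are the easy ones. The $\LWitt$-structure on $\O_v/(p)$ is interpretable without parameters in $(K,v)$: each $\ker_n$ is defined by the formula $\exists$ lifts $x_i,y_i$ with $v(W_{n-1}(\vec x)-W_{n-1}(\vec y))\geq v(p^n)$. Likewise $(vK,v(p))$ is interpretable in $(K,v)$. Hence elementary equivalence transfers.

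For existential equivalence one checks that an existential $\LWitt$-sentence about $\O_v/(p)$ translates into an existential $\Lval$-sentence. The only concern is negated $\ker_n$ atoms, but these also become existential after lifting: $v(\ldots)<v(p^n)$ is quantifier-free in the lifts. The same holds for $\Loag\cup\{c\}$-sentences about $(vK,v(p))$.

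\textbf{Backward direction.} By Keisler--Shelah (or by passing to $\aleph_1$-saturated elementary extensions together with a back-and-forth), I reduce to the case where $(K,v)$ and $(L,w)$ are $\aleph_1$-saturated and $\O_v/(p)\cong\O_w/(p)$ as $\LWitt$-structures, along with an isomorphism $(vK,v(p))\cong(wL,w(p))$. The difficulty is that saturated fields are not $p$-adically complete. So I pass to the coarsening $v_0$ associated with the convex hull $\Delta$ of $\Z v(p)$, and to the core field: the completion of $(Kv_0,\bar v)$, which in a saturated model is already spherically complete of rank one. Using $\O_v/(p)\cong\O_{\bar v}/(p)$, \cref{prop:lift} gives $(Kv_0,\bar v)\cong(Lw_0,\bar w)$. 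Since the fields are henselian and of equal characteristic $0$ for $v_0$, the classical Ax--Kochen/Ershov theorem, applied to $v_0$ in a language in which the residue field carries its valuation $\bar v$ (a relative AKE for the coarsening, e.g.\ via the standard $\mathrm{AKE}$ for residue characteristic $0$ with enriched residue structure), yields $(K,v)\equiv(L,w)$ once $vK/\Delta\equiv wL/\Delta$. This last condition is interpretable from $(vK,v(p))$, since $\Delta$ is $\emptyset$-definable-type-wise in a saturated model; to be careful, I would use that $\Delta$ is the union of the definable sets $[-nv(p),nv(p)]$, so the quotient is handled by saturation and the isomorphism of pointed groups.

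\textbf{Existential version.} Here I embed $(K,v)$ into an ultrapower of $(L,w)$ and argue symmetrically, using the embedding part of \cref{prop:lift} on the cores and the existential AKE for residue characteristic $0$.

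\textbf{Regular value groups.} For the final claim, when $vK$ is regular, $(vK,v(p))$ is determined by $\O_v/(p)$: the divisibility pattern $\Delta/\ell\Delta$ and the type of $v(p)$ are read off from the valuations of elements of $\O_v/(p)$, as in the proof of \cite[Theorem 5.1.2]{jahnke-kartas2023beyond}. I expect the main obstacle to be the careful passage from saturated models to the complete core field while preserving the $\LWitt$-isomorphism, including the choice of the decomposition $v=\bar v\circ v_0$.
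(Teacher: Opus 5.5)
Your overall strategy is the paper's. You pass to $\aleph_1$-saturated models, coarsen $v$ by the convex hull $\Delta$ of $v(p)$, apply \cref{prop:lift} to the core, and conclude with the resplendent AKE principle (respectively the embedding lemma) in equicharacteristic $0$. Your syntactic translation for the forward $\equiv_\exists$ direction is a harmless variant of the paper's embedding argument. Two points need attention.

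\textbf{Completeness of the core.} Your justification that the core is $p$-adically complete is wrong in general. $\Delta$ is the smallest convex subgroup containing $v(p)$, but it can properly contain a nontrivial convex subgroup not containing $v(p)$, for instance when $v$ is a composite of $v_p$ with a nontrivial valuation on its residue field. So $(Kv_0,\bar v)$ need not have rank one. Nor need it be spherically complete under mere $\aleph_1$-saturation. What you actually need, and what holds, is $p$-adic completeness:
\begin{itemize}
\item $\m_{v_0}=\bigcap_n p^n\O_v$, so $\O_{\bar v}=\O_v/\bigcap_n p^n\O_v$;
\item $\aleph_1$-saturation makes $\O_v\to\varprojlim_n\O_v/(p^n)$ surjective, hence $\O_{\bar v}\cong\varprojlim_n\O_v/(p^n)$;
\item $\O_{\bar v}/(p^n)=\O_v/(p^n)$ for all $n$, so \cref{prop:lift} applies with the same $\LWitt$-structure.
\end{itemize}

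\textbf{The regular case.} Here there is a genuine gap for $\equiv_\exists$. You propose to recover the theory of $(vK,v(p))$ from $\O_v/(p)$ as in the proof of Theorem 5.1.2 of \cite{jahnke-kartas2023beyond}. At best that gives the statement for $\equiv$. You give no argument that the \emph{existential} $\LWitt$-theory of $\O_v/(p)$ controls the existential theory of $(vK,v(p))$.

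It is also more than you need. Your own reduction shows that only $v_0K=vK/\Delta$ enters: the group $\Delta$ is carried by $(Kv_0,\bar v)$, which \cref{prop:lift} already controls. In an $\aleph_1$-saturated model, $vK/\Delta$ is nontrivial. For regular $vK$, the quotient by any nontrivial convex subgroup is divisible. So after saturating you are comparing two nontrivial divisible ordered abelian groups. These are elementarily equivalent, and one embeds into a sufficiently saturated elementary extension of the other. This handles the $\equiv$ and $\equiv_\exists$ versions at once, applying Keisler--Shelah or the embedding argument to $\O/(p)$ alone. This is exactly how the paper argues.
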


    \begin{proof}
        If $(K,v)\equiv (L, w)$ then clearly we also have that $\O_v/(p) \equiv \O_w/(p)$ in $\LWitt$ and $(vK, v(p))\equiv (wL, w(p))$ in $\Loag\cup\{c\}$.
        Indeed, this follows from the fact that $\O_v/(p)$ and $(vK, v(p))$ with their respective languages are interpretable in $(K,v)$.
        If $\mathrm{Th}_\exists^{\Lval}{(K,v)} \subseteq \mathrm{Th}_\exists^{\Lval} (L,w)$, then -- potentially after replacing $(L,w)$ by an elementary extension -- we may assume that we are given an $\Lval$-embedding $\iota\colon K\to L$ (see \cite[Proposition 5.2.2]{CK}).
        It is clear that $\iota$ induces an $\Lring$-embedding $\O_v/(p)\to \O_w/(p)$ and an $\Loag\cup\{c\}$-embedding $(vK, v(p))\to (wL, w(p))$.
        We wish to show that the map $\O_v/(p)\to \O_w/(p)$ is in fact an $\LWitt$-embedding.
        For $x_0, \ldots, x_{n-1}, y_0, \ldots, y_{n-1}\in \O_v$ we have that
        \begin{align*}
        \ker_n(x_0+& p\O_v, \ldots, x_{n-1}+p\O_v; y_0+p\O_v, \ldots, y_{n-1}+p\O_v) \\
        &\Longleftrightarrow v\big(W_{n-1}(x_0, \ldots, x_{n-1}) - W_{n-1}(y_0, \ldots, y_{n-1})\big) \geq v(p^n).    
        \end{align*}
        Clearly this condition is preserved by $\iota$.
        Thus, we conclude $$\mathrm{Th}_\exists^{\LWitt}(\O_v/(p))\subseteq \mathrm{Th}_{\exists}^{\LWitt}(\O_w/(p)).$$
        By symmetry, we obtain
        $$\underbrace{(K,v) \equiv_\exists (L,w)}_{\text{in }\mathcal{L}_\mathrm{val}} \Longrightarrow \underbrace{\mathcal{O}_v/(p) \equiv_\exists \mathcal{O}_w/(p)}_{\text{in }\mathcal{L}_{\mathrm{Witt}}}\textrm{ and } \underbrace{(vK,v(p)) \equiv_\exists (wL,w(p))}_{\text{in }\mathcal{L}_{\mathrm{oag}}\cup\{c\}}.$$
        
        We now focus on the other direction. We assume that 
        $$
        \underbrace{\mathcal{O}_v/(p) \equiv_{(\exists)} \mathcal{O}_w/(p)}_{\text{in }\mathcal{L}_{\mathrm{Witt}}}\textrm{ and } \underbrace{(vK,v(p)) \equiv_{(\exists)} (wL,w(p))}_{\text{in }\mathcal{L}_{\mathrm{oag}}\cup\{c\}}.$$
        holds and -- without loss of generality -- further 
        that $(K,v)$ and $(L,w)$ are $\aleph_1$-saturated.
        In the $\equiv_\exists$-case, invoking \cite[Proposition 5.2.2]{CK} once again,
        we may assume there are embeddings
        \[
        f\colon \O_v/(p)\to \O_w/(p), \quad g\colon (vK, v(p))\to (wL, w(p))
        \]
        in the language $\LWitt$ resp.\ $\Loag\cup\{c\}$.
        In the $\equiv$-case, by the Keisler--Shelah theorem \cite[Theorem 6.1.15]{CK}, we may moreover assume that both
        $f$ and $g$ are isomorphisms.

        Let $\Delta_K\leq vK$ be the smallest convex subgroup of $vK$ containing $v(p)$, and similarly define $\Delta_L\leq wL$.
        Let $v_0\colon K^\times\to vK / \Delta_K$ and $w_0\colon L^\times\to wL / \Delta_L$ be the corresponding coarsenings of the valuations.
        By saturation, the residue field of $v_0$ is 
        \[K_0\cong \mathrm{Frac}(\varprojlim \O_v/(p^n))\]
        with induced valuation $\overline{v}\colon K_0^\times\to \Delta_K$, and similarly for the residue field $L_0$ of $w_0$.
        In other words, $K_0$ and $L_0$ are $p$-adically complete and both $\O_{\overline{v}}/(p) = \O_v/(p)$ and $\O_{\overline{w}}/(p) = \O_w/(p)$
         are  semiperfect.
        Hence by \Cref{prop:lift}, the map $f\colon \O_v/(p)\to \O_w/(p)$ lifts to give an isomorphism (resp.\ embedding)
        \[
        f'\colon (K_0, \overline{v})\to (L_0, \overline{w}).
        \]

        Since $g\colon (vK, v(p))\to (wL, w(p))$ maps $v(p)$ to $w(p)$, it descends to an isomorphism (resp.\ embedding)
        \[
        g'\colon vK/\Delta_K\to wL /\Delta_L. 
        \]

        We now make a case distinction:
        \begin{enumerate}
        \item In case both $f'$ and $g'$ are isomorphisms, the 
        Ax--Kochen--Ershov theorem in equicharacteristic zero shows 
        \[
         (K,v_0)\equiv (L, w_0).
        \]
        By resplendency (see \cite[Proposition 2.3]{ketelsenCompositionAxKochenErshov2026}), this implies 
        $(K,v)\equiv (L,w)$, as desired.
        
        \item In case $f'$ or $g'$ is not an isomorphism (but only an embedding), consider an $|K|^+$-saturated elementary extension $(M,\nu, \omega)$
        of the bi-valued field $(L,w_0,w)$. Then $f'$ and $g'$ give rise to embeddings
        $$f'' \colon (K_0,\overline{v}) \hookrightarrow (M\nu, \overline{\omega})
        \textrm{ and } g'' \colon vK/\Delta_K = v_0K \hookrightarrow \nu M$$
        where $\overline{\omega}$ is the valuation induced by \(\omega\) on $M\nu$.
        By the Embedding Lemma for henselian valued fields of equicharacteristic $0$, see \cite[Lemma 4.6.2]{PD}, we obtain an embedding $h: (K,v_0) \hookrightarrow (M,\nu)$ inducing both $f''$ and $g''$. As for any $x \in K$, we have
        \begin{align*}
        x \in \O_v &\Longleftrightarrow {\res}_{v_0}(x) \in \O_{\overline{v}} \Longleftrightarrow f'(\res_{w_0}(x)) \in \O_{\overline{w}} \\ & \Longleftrightarrow f''(\res_\nu (x)) \in \O_{\overline{\omega}} ,
        \end{align*}
        $h$ is automatically also an embedding of $(K,v)$ into $(M,\omega) \succeq (L,w)$. In particular, $\mathrm{Th}_\exists(K,v) \subseteq \mathrm{Th}_\exists(L,w)$.
        Symmetrically, starting with appropriate 
        embeddings of $\O_w/(p) \hookrightarrow \O_v/(p)$
        and $(wL,w(p)) \hookrightarrow (vK,v(p))$ we obtain $\mathrm{Th}_\exists(K,v) \supseteq \mathrm{Th}_\exists(L,w)$.
        \end{enumerate}

        For the moreover statement: $\aleph_1$-saturation shows that $vK/\Delta_K$ and $wL/\Delta_L$ are nontrivial.
        As regularity implies that $vK/\Delta_K$ and $wL/\Delta_L$ are divisible, see
        \cite[\S 3]{Rob}, we automatically have that $vK/\Delta_K \equiv_{(\exists)} wL / \Delta_L$.
    \end{proof}

    As another consequence of \cref{prop:lift}, we see that the residue ring $\O/(p)$ is stably embedded as an
    $\LWitt$-structure in any henselian roughly deeply ramified field of mixed 
    characteristic:
    \begin{kor} \label{kor:SE}
    In any henselian roughly deeply ramified field $(K,v)$ of mixed characteristic $(0,p)$, the imaginary
    $\mathcal{O}_v/(p)$ is stably embedded as an $\LWitt$-structure.
    \end{kor}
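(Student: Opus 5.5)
We will use the automorphism criterion for stable embeddedness. Pass to a sufficiently saturated and strongly homogeneous elementary extension $(K^*,v^*)$ of $(K,v)$, say $\kappa$-saturated with $\kappa>|K|$. Stable embeddedness of the imaginary sort $R:=\O_{v^*}/(p)$ is then equivalent to the following statement. For every small subset $C\subseteq K^*$ (or of $K^{*\mathrm{eq}}$), there is a small set $D\subseteq R$ such that every $\LWitt$-automorphism $\sigma$ of $R$ fixing $D$ pointwise lifts to an automorphism of $(K^*,v^*)$ fixing $C$. Equivalently, by saturation, it suffices to check the following. If $\bar a,\bar b$ are tuples in $R$ with the same $\LWitt$-type over $D$, then they have the same $\Lval$-type over $C$. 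We may choose $D$ to be the image in $R$ of residues of elements of $\O_{v^*}$ generated by $C$ in a suitable sense.

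The key tool is \cref{prop:lift}, applied inside the coarsening picture from the proof of \cref{thm:main}. Let $\Delta$ be the smallest convex subgroup of $v^*K^*$ containing $v^*(p)$, and let $v_0$ be the corresponding coarsening. By saturation, the residue field $K_0$ of $v_0$ with induced valuation $\overline{v}$ is $p$-adically complete, and $\O_{\overline{v}}/(p)=R$ is semiperfect. So by \cref{prop:lift}, any $\LWitt$-automorphism $\sigma$ of $R$ lifts to a unique automorphism $\sigma'$ of $(K_0,\overline{v})$.

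Now take $\sigma$ fixing $D$, where $D$ is chosen to contain the residues mod $p$ of enough elements associated to $C$. We will extend $\sigma'$, together with the identity on the value group $v_0K^*=v^*K^*/\Delta$, to an automorphism of $(K^*,v_0)$ fixing $C$. This should follow from relative quantifier elimination/embedding results for henselian fields of equicharacteristic $0$, namely \cite[Lemma 4.6.2]{PD} and stable embeddedness of the residue field there. Residue field and value group are stably embedded and orthogonal in equicharacteristic $0$, so an automorphism of $(K_0,\overline{v})$ fixing the residue data of $C$ (in the $v_0$-residue field, as a pure field with the valuation $\overline{v}$) extends to an automorphism of $(K^*,v_0)$ fixing $C$, via a back-and-forth in a saturated model. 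As in the proof of \cref{thm:main}, any such automorphism automatically preserves $\O_{v^*}$, since $x\in\O_{v^*}$ iff $\res_{v_0}(x)\in\O_{\overline{v}}$. Hence it is an $\Lval$-automorphism of $(K^*,v^*)$ inducing $\sigma$ on $R$.

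The main obstacle is the middle step: arranging that the lift $\sigma'$ fixes the relevant residue-field parameters coming from $C$. The $v_0$-residues of elements of $C$ live in $K_0$, not in $R$. However, by \cref{prop:lift} (uniqueness) and $p$-adic completeness, an element of $\O_{\overline{v}}$ is determined by its images in all $\O/(p^n)$. Each such image is $\omega_n$ of a tuple in $R$ by \cref{lem:interpretability}. So we put into $D$ all coordinates of $\omega_n$-preimages of the images of these residues, for all $n$, together with enough elements to handle quotients, since $K_0=\mathrm{Frac}(\O_{\overline{v}})$. This set is still small, and $\sigma$ fixing $D$ forces $\sigma'$ to fix these residues, because $\sigma'$ induces $\omega_n(\vec x)\mapsto\omega_n(\sigma(\vec x))$. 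Finally, one must check that the equicharacteristic-zero extension statement indeed applies relative to the valued field $(K_0,\overline{v})$ as an expansion of the residue field. This is where resplendency, as in \cite[Proposition 2.3]{ketelsenCompositionAxKochenErshov2026}, would be invoked.
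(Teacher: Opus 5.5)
You have the paper's architecture: coarsen by the convex hull $\Delta$ of $v(p)$, lift $\sigma$ to $\sigma'$ on $(K_0,\bar v)$ via \cref{prop:lift}, make $\sigma'$ fix the residue data of the base, invoke the resplendent equicharacteristic-$0$ theory, and recover $\O_{v^*}$ from $v_0$ and $\bar v$. The gap is the step that carries all the weight: ``an automorphism of $(K_0,\bar v)$ fixing the residue data of $C$ extends to an automorphism of $(K^*,v_0)$ fixing $C$.'' You only assert this. With your concrete choice of $D$ (lifts of $v_0$-residues of elements generated by $C$, plus quotients) it fails in general. Stable embeddedness of the residue field makes $C$-definable subsets of $K_0^n$ definable over parameters from $K_0^{\mathrm{eq}}$, not over residues of elements of $\mathrm{dcl}(C)$. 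For instance, if $v_0(c)=2\delta$, then $\{\res_{v_0}(x^2/c) : v_0(x)=\delta\}$ is a $c$-definable coset of $(K_0^\times)^2$, and in general no residue of an element of $\mathrm{dcl}(c)$ pins it down. An automorphism of $K_0$ that fixes your $D$-data can then move this coset, and such an automorphism cannot extend to one fixing $c$. The fix is what the paper does. Enlarge $C$ to a small $\aleph_1$-saturated model $M\preceq K^*$ and take $D$ to be the image of $\O_{v^*}\cap M$ in $R$. Then the uniqueness part of \cref{prop:lift}, or your $p$-adic separatedness argument, shows that $\sigma'$ fixes the $v_0$-residue field $M_0$ of $M$ pointwise.

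You also do not need, and have not justified, extending $\sigma'$ to a global automorphism of $K^*$. Strong homogeneity only extends \emph{small} partial elementary maps, whereas $\sigma'$ is defined on the whole residue field. So the extension would need $K^*$ saturated in its own cardinality, plus a back-and-forth that uses stable embeddedness to replace types over $K_0$ by types over small subsets. Moreover, the base case of that back-and-forth is exactly the claim that $\mathrm{id}_M\cup\sigma'$ is elementary. That claim is the resplendent relative quantifier elimination in equicharacteristic $0$ (\cite[Section 7.2]{vdd}), not the embedding lemma \cite[Lemma 4.6.2]{PD}: for tuples from the residue field, the type in $(K_0,\bar v)$ over $M_0$ determines the $\Ltval$-type in $(K^*,v_0,\bar v)$ over $M$. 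This is all you need, since you already reduced to a type statement. Apply it to a lift $\bar a_0\in\O_{\bar v}$ of $\bar a$ and to $\sigma'(\bar a_0)$, which lifts $\bar b$. Then pass to the $\Lval$-structure $(K^*,v^*)$ interpreted in $(K^*,v_0,\bar v)$, and reduce modulo $p$. This is precisely how the paper's proof concludes, with its $\aleph_1$-saturated $(K,v)$ playing the role of $M$.
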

    \begin{proof}
        Let $(K,v)$ be an $\aleph_1$-saturated 
        henselian roughly deeply ramified field of mixed characteristic,
        and let $(L,w) \succeq (K,v)$ be a 
        monster model\footnote{If the reader feels uncomfortable about the use of monster models to show stable embeddedness, \cite[Section 3]{HK23} details how this can be avoided.} of $\mathrm{Th}_{\Lval}(K,v)$ (see \cite[Theorem 6.1.7]{TZ}).
        Take finite tuples $\vec{\alpha}, \vec{\beta} \in \O_w/(p)^m$ 
        satisfying 
        $${\LWitt}\textrm{-}\mathrm{tp}^{\O_w/(p)}(\vec{\alpha}\,|\,\O_v/(p)) = {\LWitt}\textrm{-}\mathrm{tp}^{\O_w/(p)}(\vec{\beta}\,|\,\O_v/(p)).$$ 
        We need to show that
        $${\Lval}\textrm{-}\mathrm{tp}^{(L,w)}(\vec{\alpha}\,|\,K) = \Lval\textrm{-}\mathrm{tp}^{(L,w)}(\vec{\beta}\,|\,K) $$
        holds.

        Since $(L,w)$ is a monster model of its theory, 
        so is the $\LWitt$-structure $\O_w/(p)$ (as it is interpretable). 
        By \cite[Corollary 6.1.8]{TZ}, there
        is an $\LWitt$-automorphism $\overline{\sigma}$ of $\O_w/(p)$ with $\overline{\sigma}(\vec{\alpha})=\vec{\beta}$ and $\overline{\sigma}(\delta)=\delta$ for any $\delta \in \O_v/(p)$ 
        (i.e., $\overline\sigma \in \mathrm{Aut}^{\LWitt}(\O_w/(p) \,|\, \O_v/(p))$).

        As in the proof of \cref{thm:main}, let $\Delta_K\leq vK$ be the smallest convex subgroup of $vK$ containing $v(p)$, and similarly define $\Delta_L\leq wL$.
        Let $v_0\colon K^\times\to vK / \Delta_K$ and $w_0\colon L^\times\to wL / \Delta_L$ be the corresponding coarsenings of the valuations with residue fields $K_0$ and $L_0$
        respectively. Let $\overline{v}$ (resp.~$\overline{w}$) denote the valuation on $K_0$ (resp.~$L_0$)
        induced by $v$ (resp.~$w$). 
        Note that we get $(K_0,\overline{v}) \subseteq (L_0,\overline{w})$ since \((K,v)\subseteq(L,w)\) and \(w_0|K=v_0\).
        Just like in \cref{thm:main}, \cref{prop:lift} implies that $\overline{\sigma}$ lifts to an $\Lval$-automorphism 
        $\sigma_0$ of
        $(L_0,\overline{w})$. As its restriction $\sigma_0|_{K_0}$
        is a lift of the identity map $\overline{\sigma}|_{\O_v/(p)}$, $\sigma_0$ fixes
        $K_0$ pointwise (this is the uniqueness statement in \cref{prop:lift}).
        Thus, choosing any preimage $\vec{a_0} \in (\O_{\overline{w}})^m$ of $\vec{\alpha} \in (\O_{\overline{w}}/(p))^m =
        (\O_w/(p))^m$,
        we get
        \begin{equation}
            {\Lval}\textrm{-}\mathrm{tp}^{(L_0,\overline{w})}(\vec{a_0}\,|\,K_0) = \Lval\textrm{-}\mathrm{tp}^{(L_0,\overline{w})}(\sigma_0(\vec{a_0})\,|\,K_0) \label{eq:res} 
        \end{equation}

        Now consider the enriched valued field $(L,w_0, \overline{w})$ in the language $\Ltval$, the expansion of $\Lval$ that adds $\overline{w}$ as additional structure on the residue field of $(L,w_0)$, and its $\Ltval$-substructure $(K,v_0,\overline{v})$. By the 
        resplendent version of the AKE Theorem in equicharacteristic $0$, see 
        \cite[Section 7.2]{vdd}, \eqref{eq:res} implies 
        \begin{equation}
        {\Ltval}\textrm{-}\mathrm{tp}^{(L,w_0,\overline{w})}(\vec{a_0}\,|\,K) = \Ltval\textrm{-}\mathrm{tp}^{(L,w_0,\overline{w})}(\sigma_0(\vec{a_0})\,|\,K) \label{eq2}
        \end{equation}
        As the $\Ltval$-structure $(L,w_0,\overline{w})$ interprets the $\Lval$-structure
        $(L,w)$, (\ref{eq2}) implies 
        \begin{equation}
        {\Lval}\textrm{-}\mathrm{tp}^{(L,w)}(\vec{a_0}\,|\,K) = \Lval\textrm{-}\mathrm{tp}^{(L,w)}(\sigma_0(\vec{a_0})\,|\,K).
        \end{equation}
        Finally, by reducing both $\vec{a_0}$ and $\sigma_0(\vec{a_0})$ modulo $p\O_{\overline{w}}$, we obtain 
        \begin{equation}
        {\Lval}\textrm{-}\mathrm{tp}^{(L,w)}(\vec{\alpha}\,|\,K) = \Lval\textrm{-}\mathrm{tp}^{(L,w)}(\vec{\beta} \,|\,K),
        \end{equation}
        as desired.
    \end{proof}

    In \cref{kor:SE.ring} we will prove that $\O_v/(p)$ is in fact stably embedded as an $\Lring$-structure.
    In contrast, the value group is not stably embedded as an $\Loag$-structure
    (which is the same as being stably embedded as an $\Loag \cup \{c\}$-structure): 

    \begin{examp}
    	Consider the example constructed in \cite[Example~6.10]{Kuh25}, namely a 
        deeply ramified field \( (L,v_p) \) of mixed characteristic, with a rank-1 and \( p \)-divisible value group, and with algebraically closed residue field \( Lv_p\eqqcolon K \) and a non-trivial valuation \( \overline v \) on \( K \). 
    	The composition \( (L,v\coloneqq \overline v \circ v_p) \) is a mixed characteristic deeply ramified field of rank 2 with \( p \)-divisible value group
        and there is a Galois degree-\( p \) defect extension \( (L(a)|L,v) \) with independent defect and associated convex subgroup \( \overline v K \).

    	Now, let $\tilde{v}$ be any prolongation of $v$ to $L^\mathrm{alg}$ and consider the ramification field \( (L^R,v^R) \) of \( (L^\mathrm{alg},\tilde{v})/(L,v) \). 
    	As \( vL \) is \( p \)-divisible, we get that \( \Gamma\coloneqq v^RL^R \) is divisible.
    	By \cite[Proposition~3.8]{KuhlBla}, \( (L^R(a)|L^R,v^R) \) is a degree-\( p \) independent defect extension where the associated convex subgroup corresponds to the rank-1 coarsening \( w \) of \( (L^R,v_R) \) (i.e., the unique coarsening of $v^R$ which is a prolongation of $v_p$).
    	
        By \cite[Theorem~4.11]{KRS}, $w$ is $\Lval(L^R)$-definable (even $\Lring(L^R)$-definable). Thus, the $\Lval(L^R)$-structure of $(L^R,v^R)$
        defines a non-trivial proper convex subgroup of the divisible ordered group $\Gamma$, which is not $\Loag(\Gamma)$-definable (e.g.\ by quantifier elimination~\cite[Corollary~3.1.17]{Mar}). In particular,
        $\Gamma$ is not stably embedded as an $\Loag$-structure in $(L^R,v^R)$.
    \end{examp}
    
    \subsection{Recovering the $\LWitt$-structure from $\Lring$ plus constants}

    We show that the $\LWitt$-structure on $\O_v/(p)$ is already $\Lring$-definable with parameters.
    This means that an appropriate version of Theorem~\ref{thm:main} for 
    $\preceq$ and $\preceq_{\exists}$ already holds with the language $\Lring$ on $\O_v/(p)$, as in~\cite[Thm.\,1.7.3]{jahnke-kartas2023beyond}.
    Recall that $\Frob\colon \O_v/(p)\to \O_v/(p)$ denotes the Frobenius morphism. 
    Note that for a non-zero element $\alpha$ in $\O_v/(p)$, the valuation $v(\alpha)$ is well-defined, and lies in $[0,v(p))\subseteq vK$.

    \begin{defi}
        For $n\geq 1$ define
        \[
        \theta_n\colon \ker(\Frob^{n})\to \O_v/(p^{n}),\ x + p\O_v\mapsto \frac{x^{p^{n}}}{p} + p^{n}\O_v.
        \]
    \end{defi}
    
    Note that $\theta_n$ is well defined.
    Indeed, if $x+p\O_v\in \O_v/(p)$ lies in the kernel of $\Frob^{n}$ then $x^{p^{n}} \in p\O_v$, so that $x^{p^{n}}/p$ is still an element of $\O_v$.
    Additionally, if $x,y\in \O_v$ are both representatives of $x+p\O_v$, then $x^{p^{n}} \equiv y^{p^{n}}\bmod p^{n+1}\O_v$, by Lemma~\ref{lem:warm.up}.
    
    \begin{prop} \label{prop:ring}
        In any roughly deeply ramified field $(K,v)$ of mixed characteristic $(0,p)$, 
        every $\LWitt$-definable subset of $(\O_v/(p))^n$ is $\Lring(\O_v/(p))$-definable.
    \end{prop}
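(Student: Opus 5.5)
The plan is to prove, by induction on $n$, that the $n$-ary relation $\ker_n(\cdot\,;\vec 0)$ is $\Lring(\O_v/(p))$-definable. By \cref{rem:ker0} this relation $\emptyset$-defines $\ker_n$ using only the ring polynomials $S_i,P_i$. Once every $\ker_n$ is so defined, any $\LWitt$-formula translates into an $\Lring$-formula with parameters, which gives the proposition. The base case is trivial, since $\ker_1$ is equality.

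For the inductive step, I split off the first Witt coordinate. For $x_0,\dots,x_{n-1}\in\O_v$ we have
\[
W_{n-1}(x_0,\dots,x_{n-1}) = x_0^{p^{n-1}} + p\,W_{n-2}(x_1,\dots,x_{n-1}).
\]
Hence $\map{n}(\vec x)=0$ holds if and only if two conditions hold. First, $x_0^{p^{n-1}}\in p\O_v$, i.e.\ $\Frob^{n-1}(\bar x_0)=0$. Second, after dividing by $p$, we need $\theta_{n-1}(\bar x_0)+\map{n-1}(\bar x_1,\dots,\bar x_{n-1})=0$ in $\O_v/(p^{n-1})$. The dependence on representatives is harmless by \cref{lem:warm.up} and the well-definedness of $\theta_{n-1}$. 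So it suffices to define, with parameters and using only $\ker_m$ for $m<n$, the ``graph'' of $\theta_m$ relative to $\map{m}$:
\[
\Gamma_m=\{(\bar x,\vec z): \Frob^m(\bar x)=0,\ \theta_m(\bar x)=\map{m}(\vec z)\}.
\]
Given $\Gamma_{n-1}$, the condition becomes
\[
\exists \vec z\,\bigl(\Gamma_{n-1}(\bar x_0,\vec z)\wedge \ker_{n-1}(S_0(\vec z,\vec x'),\dots,S_{n-2}(\vec z,\vec x');\vec 0)\bigr),
\]
where $\vec x'=(\bar x_1,\dots,\bar x_{n-1})$. By the induction hypothesis this is $\Lring$-definable with parameters.

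To define $\Gamma_m$, I use that $(K,v)$ is roughly deeply ramified. From Kuhlmann--Rzepka (or a direct argument from semiperfectness, which I would have to supply) I get an element $\varpi\in\O_v$ with $v(\varpi)=v(p)/p^m$. Then $\varpi^{p^m}=pu$ with $u\in\O_v^\times$, and I take as parameters $\bar\varpi$ together with a Witt-type tuple $\vec u$ satisfying $\map{m}(\vec u)=u+p^m\O_v$, which exists by surjectivity in \cref{lem:interpretability}. Let $\bar x\neq 0$ lie in $\ker\Frob^m$, with a representative satisfying $v(x)<v(p)$. Then $v(x)\ge v(\varpi)$, so $x=\varpi y$ with $y\in\O_v$. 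This gives
\[
x^{p^m}/p = u\,y^{p^m} \quad\text{and}\quad y^{p^m}=W_{m-1}(y^p,0,\dots,0).
\]
So I propose
\[
\Gamma_m(\bar x,\vec z)\iff \Frob^m(\bar x)=0\wedge\exists \bar y\,\bigl(\bar x=\bar\varpi\bar y\wedge \ker_m(P_0(\vec u,(\bar y^p,0,\dots)),\dots;\vec z)\bigr),
\]
with $\bar x=0$ handled as a separate clause, where $\theta_m(0)=0$. Here $\ker_m$ with $m<n$ is already definable.

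The main obstacle is checking that this formula does not depend on the choice of $\bar y$. The element $y$ is only determined modulo $(p/\varpi)\O_v$, not modulo $p$, so \cref{lem:warm.up} does not apply directly. I will expand $(y+\delta)^{p^m}-y^{p^m}$ with $v(\delta)\ge v(p)-v(p)/p^m$. The terms are $\binom{p^m}{k}y^{p^m-k}\delta^k$, and for $p^j\le k<p^{j+1}$ the $p$-adic valuation of the binomial coefficient is at least $m-j$. I then need to verify $v\bigl(\binom{p^m}{k}\delta^k\bigr)\ge m\,v(p)$. The extreme case is $k=p^m$, where $v(\delta^{p^m})\ge (p^m-1)v(p)\ge m\,v(p)$; the intermediate cases should follow from a similar crude estimate. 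If this estimate fails for small $p$ or $m$, my fallback is to choose $\varpi$ with smaller value, say $v(p)/p^{m+1}$, and adjust the exponents accordingly.
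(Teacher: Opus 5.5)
Your proposal is correct and is essentially the paper's proof. The paper uses the same joint induction on the definability of $\ker_n$ and of (the graph of) $\theta_n$, with parameters $\bar\varpi$ and a Witt tuple for $u=\varpi^{p^m}/p$, and your identity $\theta_m(\bar x)=u\,(x/\varpi)^{p^m}$ is exactly its $\theta_n(\beta)=\theta_n(\alpha)g(\beta)$.

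Your binomial estimate does go through: it reduces to $p^{j}(1-p^{-m})\ge j$. The paper avoids it more simply. Since $x=\varpi y$ is determined modulo $p$, \cref{lem:warm.up} determines $x^{p^m}$ modulo $p^{m+1}$, and dividing by $\varpi^{p^m}$, which has value $v(p)$, determines $y^{p^m}$ modulo $p^m$.
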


    \begin{proof} 
        Consider the following statements
        \begin{enumerate}
            \item[($I_n$)] $\ker_n\subseteq (\O_v/(p))^{2n}$ is $\Lring(\O_v/(p))$-definable,
            \item[($D_n$)] the map $\theta_n\colon \ker(\Frob^{n})\to \O_v/(p^{n})$ is $\Lring(\O_v/(p))$-definable in $\O_v/(p)$.
        \end{enumerate}
        We will prove these statements via a joint induction on $n$, from which the result follows immediately.
        
        Note that if $(I_1), \ldots, (I_n)$ are true, then the rings $(\O_v/(p^m), +, \cdot)$ are $\Lring(\O_v/(p))$-interpretable in $\O_v/(p)$ for $m\leq n$.
        Indeed, this follows from the proof of \Cref{lem:interpretability} and \cref{rem:kernenough}, 
        which moreover gives the explicit interpretation
        \[
        \begin{array}{rrcl}
        \omega_n\colon& (\O_v / (p))^n &\to& \O_v / (p^n)\\ &(x_0+p\O_v, \ldots, x_{n-1}+p\O_v)&\mapsto &W_{n-1}(x_0, \ldots, x_{n-1})+p^n\O_v.
        \end{array}
        \]
        Hence the statement $(D_n)$ only makes sense once we know that $(I_n)$ is true.
        Additionally, the maps $\res_{m, k}\colon \O_v/(p^m)\to \O_v/(p^k)$ are also $\Lring(\O_v/(p))$-definable for $n\geq m\geq k$, since under the interpretation they are simply given by
        \[
        (\O_v/(p))^m\to (\O_v / (p))^k,\ (a_1, \ldots, a_m)\mapsto (a_1^{p^{m-k}}, a_2^{p^{m-k}}, \ldots, a_k^{p^{m-k}}).
        \]

        The statement $(I_1)$ clearly holds.
        Assume that $(I_1), \ldots, (I_n)$ and $(D_1), \ldots, (D_{n-1})$ are all true.
        Then we prove that also $(D_n)$ is true. 
        Take $a\in \O_v$ of valuation $v(p)/p^n$, which exists because $K$ is roughly deeply ramified, and consider the map
        \[
        \begin{array}{rrcl}
            g\colon  & \ker(\Frob^n) &\to & \O_v / (p^n), \\
            & b+p\O_v & \mapsto & \left( \frac{b}{a} \right)^{p^n} + p^n\O_v. 
        \end{array}
        \]
        We claim that $g$ only depends on $\alpha \coloneqq a + p \O_v$, 
        and that it is well-defined and $\Lring(\O_v/(p))$-definable in $\O_v/(p)$, which makes sense because of $(I_n)$.
        To see that $g$ is well-defined, we first note that for $x\in \O_v$ we have that
        \[
        (1+p^nx)^{-1} = 1 - {p^nx}(1+p^nx)^{-1}\in 1+p^n\O_v.
        \]
        In other words, $1+p^n\O_v$ is closed under inversion.
        Take $u, u'\in \O_v$. 
        By \cref{lem:warm.up} we can write $(b+pu)^{p^n} = b^{p^n} + p^{n+1}c$ for some $c\in \O_v$ and similarly, using that $v(a^{p^n}) = v(p)$, write $(a+pu')^{p^n} = a^{p^n}(1+p^nd)$ for some $d\in \O_v$.
        Then we have that
        \[
        \frac{(b+pu)^{p^n}}{(a+pu')^{p^n}} = \left(\frac{b^{p^n}}{a^{p^n}} + p^n c \frac{p}{a^{p^n}}\right)(1+p^n d)^{-1}\in\left( \frac{b}{a}\right)^{p^n} + p^n\O_v.
        \]
        Hence $g$ is well-defined and only depends on \( \alpha \).
        To see that $g$ is $\Lring(\O_v/(p))$-definable, we note that $g(\beta) = \gamma$ if and only if for some (equivalently any) $\delta\in \O_v/(p)$ with $\alpha \delta = \beta$ we have that $\gamma = \omega_n(\delta^p, 0, \ldots, 0)$.
        To conclude $(D_n)$, for $\beta\in \ker(\Frob^n)$ we then have
        \[
        \theta_n(\beta) = \theta_n(\alpha)g(\beta),
        \]
        so that $\theta_n$ is $\Lring(\O_v/(p))$-definable.
        In particular, the parameters needed to witness $(D_n)$ are those witnessing \( (I_1),\ldots,(I_n),(D_1),\ldots,(D_{n-1}) \) together with $\alpha$ and $\theta_n(\alpha)\in \O_v/(p^n)$.

        Next assume that $(I_1), \ldots, (I_n)$ and $(D_1), \ldots, (D_n)$ all hold.
        Then we prove that also $(I_{n+1})$ is true.
        In view of \cref{rem:ker0}, it is sufficient to show that $\ker_{n+1}(-; 0)$ is $\Lring(\O_v/(p))$-definable.
        So suppose that $\zeta = (\zeta_0, \ldots, \zeta_n)$ is an element of $(\O_v/(p))^{n+1}$ such that $\ker_{n+1}(\zeta; 0)$ holds.
        If $z_0, \ldots, z_n\in \O_v$ are lifts of $\zeta_0, \ldots, \zeta_n$, then by definition this means that $W_n(z_0, \ldots, z_n) \equiv 0 \bmod p^{n+1}\O_v$.
        Reducing this equation modulo $p$, we see that $\zeta_0^{p^n} = 0$, so that $\zeta_0\in \ker (\Frob^n)$.
        Dividing $W_n(z_0, \ldots, z_n)$ by $p$, we obtain that
        \begin{equation}\label{eq:params}
        \theta_n(\zeta_0) + \map{n}(\zeta_1,\ldots,\zeta_n)=0    
        \end{equation}
        in \( \O_v/(p^n) \).
        Conversely, if $\zeta\in (\O_v/(p))^{n+1}$ is such that $\zeta_0^{p^n} = 0$ and \eqref{eq:params} holds, then $\ker_{n+1}(\zeta; 0)$ holds.
        Because of $(I_n)$ and $(D_n)$, \eqref{eq:params} is an $\Lring(\O_v/(p))$-definable condition, and so we conclude that also $\ker_{n+1}$ is $\Lring(\O_v/(p))$-definable.
    \end{proof}

    As a consequence, one can now recover the AKE\({}_{\preceq}\) theorem from~\cite[Theorem~5.1.4]{jahnke-kartas2023beyond} by combining \cref{prop:ring} with the proof ideas of \cref{thm:main}.

    \begin{rem} 
    \begin{enumerate}
        \item Going through the previous proof, we see that to recover the $\LWitt$-structure it is enough to add constant symbols for elements $\{\alpha_n, \theta_n(\alpha_n)\}_{n\in \N}$, where $\alpha_n\in \O_v/(p)$ has valuation $v(p)/p^n$, and $\theta_n(\alpha_n)$ is considered as an element of $(\O_v/(p))^n$ via the interpretation $\omega_n$.
        For example, if $(K, v)$ is a perfectoid field of mixed characteristic $(0,p)$ and $p$ has a compatible system of $p$th power roots $(p^{1/p^n})_{n\in \N}$ then it is sufficient to add constants for $p^{1/p^n}+p\O_v\in \O_v/(p)$.
        Indeed, $p^{1/p^n}$ has valuation $v(p)/p^n$ and $\theta_n(p^{1/p^n}) = 1 \in \O_v/(p^n)$.
        \item
        \label[rem]{rem:claude}
        Let $(K,v)$ be a perfectoid field of mixed characteristic and
        take parameters $\{\alpha_n, \theta_n(\alpha_n)\}_{n\geq 1}$, where $\alpha_n\in \O_v/(p)$ and $\theta_n(\alpha_n)\in (\O_v/(p))^n$ as in (1) above.
        Assume in addition that  $\alpha_n^p=\alpha_{n-1}$ holds (which is always possible to arrange since $K$ is perfectoid)        
        and consider
\[
  \alpha \coloneqq (\alpha_n)_n \in \varprojlim_{x\mapsto x^p} \O_v/(p) = \O_v^\flat .
\]

Then
\[
  v^\flat(\alpha) = pv(\alpha_1) = 1 = v(p),
\]
i.e.\ $\alpha$ is a pseudouniformizer of the tilt.

By \cref{lem:warm.up} and the relation $\alpha_n^p = \alpha_{n-1}$, the elements
$\theta_n(\alpha_n)$ are compatible under the projections $\O_v/(p^{n+1}) \to \O_v/(p^{n})$, they therefore define an element
\[
  u \coloneqq \varprojlim_n \theta_n(\alpha_n) \in \varprojlim_n \O_v/(p^{n}) \cong \O_v .
\]
Since $v(\alpha_1) = 1/p$ we have $v(\theta_1(\alpha_1)) = 0$ and therefore $v(u) = v(\theta_1(\alpha_1)) = 0$.
In other words, $u\in \O_v^\times$.

Moreover, the two elements are related by the equation $\alpha^\sharp = pu$.
Writing $\theta\colon W(\O_v^\flat) \to \O_v$ for Fontaine's period map, we now 
choose any $\tilde{u} \in W(\O_v^\flat) $ with $\theta(\tilde{u})=u$. 
For $\beta\in \O_v^\flat$ denote by $[\beta]\in W(\O_v^\flat)$ the Teichmüller lift.
Then $[\alpha] -p\tilde{u}$ generates the kernel of $\theta$, and identifies $\O_v \cong  W(\O_v^\flat)/([\alpha] -p\tilde{u})$ 
as an untilt of
$\O_v^\flat$. 

Thus, choosing parameters (at least such satisfying the additional condition $\alpha_n^p=\alpha_{n-1}$ for all $n$) to define
the $\LWitt$-structure on $\O_v/(p)$ amounts to fixing $\O_v$ as an untilt of 
$\O_v^\flat$.
    \end{enumerate}
    \end{rem}

    \begin{kor}\label{kor:SE.ring}
        Let $(K,v)$ be a roughly deeply ramified henselian valued field of mixed characteristic $(0,p)$.
        Then $\O_v/(p)$ is stably embedded as an $\Lring$-structure.
    \end{kor}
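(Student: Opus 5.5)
The plan is to combine \cref{kor:SE} with \cref{prop:ring}. \cref{kor:SE} says that every subset of $(\O_v/(p))^m$ which is $\Lval(K)$-definable in $(K,v)$ is $\LWitt(\O_v/(p))$-definable in $\O_v/(p)$. \cref{prop:ring} says that every $\LWitt$-definable subset of $(\O_v/(p))^m$ is $\Lring(\O_v/(p))$-definable.

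The one point needing care is that \cref{prop:ring} is stated for $\emptyset$-definable $\LWitt$-sets. We must check that it also handles $\LWitt$-definable sets with parameters from $\O_v/(p)$. This is immediate. Each relation symbol $\ker_n$ is $\Lring$-definable using finitely many parameters $\vec{c}_n$ from $\O_v/(p)$, namely the $\alpha_k$ and $\theta_k(\alpha_k)$ for $k\le n$. So we replace every occurrence of $\ker_n$ in an $\LWitt$-formula $\varphi(\vec{x},\vec{d})$ by its defining $\Lring$-formula with parameters $\vec{c}_n$. Only finitely many $n$ occur in $\varphi$, so the result is an $\Lring$-formula with parameters $\vec{d}$ together with finitely many $\vec{c}_n$, all in $\O_v/(p)$. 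It defines the same set, since the substitution commutes with the Boolean connectives and with quantification over $\O_v/(p)$.

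Putting this together: let $X\subseteq(\O_v/(p))^m$ be $\Lval(K)$-definable. By \cref{kor:SE}, $X$ is $\LWitt(\O_v/(p))$-definable. By the substitution argument above, $X$ is $\Lring(\O_v/(p))$-definable.

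There is one subtlety. \cref{kor:SE} is proved in an $\aleph_1$-saturated model using a type-theoretic criterion. Stable embeddedness in that form transfers to all models of the theory. It can be phrased as the first-order statement that, for each formula $\psi(\vec{x},\vec{y})$, there are finitely many $\LWitt$-formulas $\chi_i(\vec{x},\vec{z})$ such that every instance $\psi(\vec{x},\vec{b})$ restricted to the sort equals some $\chi_i(\vec{x},\vec{e})$. This uniform version follows from the type version in a saturated model by compactness, and we take that as the intended meaning. The same uniformity holds after translating to $\Lring$, because the substitution $\ker_n\mapsto$ defining formula is uniform in the model: the witnessing parameters $\alpha_k,\theta_k(\alpha_k)$ can be quantified existentially over elements satisfying a definable property. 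Concretely, $\alpha_k$ ranges over elements with $v(\alpha_k)=v(p)/p^k$, a condition expressed in $\Lring$ as $\alpha_k^{p^k}=0\neq \alpha_k^{p^k-1}\cdot(\ldots)$. Alternatively, they are simply treated as extra parameters from $\O_v/(p)$.

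In summary, there is essentially no obstacle: the corollary follows formally from \cref{kor:SE} and \cref{prop:ring}, with the parameter bookkeeping just described as the only step needing care.
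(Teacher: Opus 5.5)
Your proposal is correct and takes essentially the same approach as the paper, whose proof simply combines \cref{kor:SE} with \cref{prop:ring} in one line; substituting the $\Lring$-definitions of the finitely many $\ker_n$ occurring in a formula is the right bookkeeping. One caution about your aside: keep $\alpha_k,\theta_k(\alpha_k)$ as extra parameters from $\O_v/(p)$, which is all stable embeddedness requires and already gives uniformity, rather than quantifying them existentially over a parameter-free $\Lring$-condition. No such condition can single out $\theta_k(\alpha_k)$, since otherwise $\ker_n$ would be uniformly $\emptyset$-definable in $\Lring$, contradicting \cref{ex:structure} together with \cref{thm:main}.
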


    \begin{proof}
        This follows directly from \Cref{kor:SE} and \Cref{prop:ring}.
    \end{proof}

    \section{Quantifier elimination}

    Let $(K, v)$ be a valued field of mixed characteristic $(0,p)$.
    Recall that an \emph{angular component} is a multiplicative morphism $\ac_n\colon K^\times\to (\O_v/(p^n))^\times$ which agrees with the reduction map $\res_n\colon \O_v\to \O_v / (p^n)$ on $\O_v^\times$.
    We extend $\ac_n$ to $K\to \O_v/(p^n)$ by setting $\ac_n(0) = 0$.
    A system of angular components $(\ac_n)_n$ is said to be \emph{compatible} if for $n\geq m$ we have that $\ac_m = \res_{n,m}\circ \ac_n$.
    If $(K,v)$ is $\aleph_1$-saturated, then 
    a compatible system of angular components always exists:
    indeed, in that case there exists a cross-section $s \colon vK\to K^\times$ by~\cite[Corollary 5.5]{vdd} which may be used to define $\ac_n$ via $\ac_n(x) = \res_n(x s(-v(x)))$.
    For convenience, we use angular components in our relative quantifier elimination result. 
    It seems reasonable to expect that one can do without by using a language similar to the one in~\cite{ACGZ22}.

    Let $(K,v)$ be a roughly deeply ramified field of mixed characteristic $(0,p)$.
    In this section, we prove a relative quantifier elimination result down to the residue ring $\O_v/(p)$ and the value group $\Gamma$.
    For the most part, this is a syntactical reformulation of known quantifier elimination results, pulled back via the interpretation of $\O_v/(p^n)$ in $\O_v/(p)$ from \cref{lem:interpretability}.
    
    Let us first explain the language, which will be denoted by $\LQE$.
    The language is $3$-sorted, with the following sorts:
    \begin{enumerate}
        \item $\VF$ with the language of rings $\Lring$, 
        \item $\VG$ with the language of ordered abelian groups $\Loag$, and
        \item $R_1$ with the language $\Lring$.
    \end{enumerate}
    There are moreover symbols for
    \begin{enumerate}
        \item maps $v\colon \VF\to \VG$ and $\res_1\colon \VF\to R_1$,
        \item predicates $\AC_n\subseteq \VF\times R_1^n$ for $n\geq 1$, and
        \item predicates $S_n\subseteq \VG\times R_1^n$ for $n\geq 1$.
    \end{enumerate}

    If $(K,v)$ is a roughly deeply ramified valued field of mixed characteristic, and comes equipped with a system of compatible angular components $(\ac_n)_n$, then $(K,v)$ naturally has $\LQE$-structure.
    More precisely, $\VF$ is interpreted as $K$ with the ring operations, $\VG$ is interpreted as $\Gamma\cup\{\infty\}$ with the group operations, and $v\colon K\to \Gamma\cup\{\infty\}$ is the valuation.
    Let $\omega_n\colon (\O_v/(p))^n\to \O_v/(p^n)$ be the interpretation from above.
    The sort $R_1$ is interpreted as $\O_v/(p)$ with the $\Lring$-language.
    The predicate $\AC_n$ is the trace of $\ac_n\colon K\to \O_v/(p^n)$ on $\O_v/(p)$.
    More precisely, we have $(x,\xi)\in \AC_n$ if and only if $\ac_n(x) = \omega_n(\xi)$.
    We define a map $s_n\colon \Gamma_{\geq 0}\to \O_v/(p^n)$ by mapping an element $\gamma\in \Gamma_{\geq 0}$ to $\res_n(x)$, where $x\in K$ is any element with $\ac_n(x) = 1$ and $v(x) = \gamma$.
    Note that this map is well-defined and definable using $\ac_n$.
    Furthermore, $S_n$ is the trace of $s_n$ on $\Gamma \times R_1^n$.
    More precisely, we have $(\gamma, \xi)\in S_n$ if and only if $\gamma \geq 0$ and $s_n(\gamma) = \omega_n(\xi)$, or $\gamma < 0$ and $\xi = (0, \ldots, 0)$.

    Recall that if $t(\zeta_1, \ldots, \zeta_m)$ is an $\Lring$-term, then there exist $n$ $\Lring$-terms $t^W((\zeta_{i,j})_{i=1, \ldots, m, j=0, \ldots, n-1}) = (t^W_0, \ldots, t^W_{n-1})$ such that for $\zeta_{i,j}\in R_1$ for $i=1, \ldots, m$, $j=0, \ldots, n-1$ we have
    \[
    t(\map{n}(\zeta_{1,0}, \ldots, \zeta_{1,n-1}), \ldots, \map{n}(\zeta_{m,0}, \ldots, \zeta_{m,n-1})) = \map{n} (t^W((\zeta_{i,j})_{ij})).
    \]
    As in the proof of \Cref{lem:interpretability}, this follows from \cite[Lemma~6.5]{vdd}.
    For $n,m$ positive integers with $n\geq m$ we denote also by $\res_{n,m}\colon R_1^n\to R_1^m$ the map $\res_{n,m}\colon R_n\to R_m$ pulled back through the interpretation $\map{n}$.
    More explicitly, this is simply the map
    \[
    \res_{n,m}(\zeta_1, \ldots, \zeta_n) = (\zeta_1^{p^{n-m}}, \ldots, \zeta_m^{p^{n-m}}). 
    \]

    We let $\cT$ be the $\LQE$-theory of roughly deeply ramified henselian valued fields of mixed characteristic $(0,p)$ with a compatible system of angular component maps.
    Then we have the following.

    \begin{thm}
    \label{thm:qe}
        The theory $\cT$ eliminates $\VF$-quantifiers.
    \end{thm}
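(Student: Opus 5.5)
We plan to use the standard back-and-forth criterion for relative quantifier elimination in many-sorted languages, following the strategy of \cite[Theorem 3.3.10]{rideau2022model}. It suffices to show the following. Let $M,N\models\cT$ with $N$ $|M|^+$-saturated, and let $A\subseteq M$ be a substructure; we may enlarge $A$ so that its sorts $R_1(A)$ and $\VG(A)$ are the full sorts $R_1(M)$ and $\VG(M)$. Let $f\colon A\to N$ be an $\LQE$-embedding whose restriction to $R_1\cup\VG$ is elementary in the induced structure on those sorts. We must show that $f$ extends to an embedding of $M$ into $N$. Here ``induced structure'' means the full structure induced on $R_1$ and $\VG$ by $\VF$-quantifier-free formulas. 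It includes the $\LWitt$-structure, since $\ker_n$ is quantifier-free definable from the $R_1$-sort using the terms $\map{n}$ (the equality $\map{n}(\xi)=\map{n}(\zeta)$ is expressible via the predicates $S_n$, $\AC_n$; more directly, it is $\VF$-quantifier-free in $\LQE$, since $\ker_n$ is pulled back from the maps $\AC_n$). The value group enters through $S_n$.

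The key step is to translate everything into the $\omega$-sorted setting. By \cref{lem:interpretability}, each $R_n=\O/(p^n)$, the maps $\res_{n,m}$, and the ring operations are uniformly quantifier-free interpretable in $R_1$ with $\LWitt$ via $\map{n}$. Under this translation, the predicates $\AC_n$ become graphs of compatible angular components $\ac_n\colon\VF\to R_n$. We first verify that $\ker_n$ is $\VF$-quantifier-free $\LQE$-definable on $R_1$. This holds since $\map{n}(\xi)=\map{n}(\zeta)$ is equivalent to $\exists$-free conditions: for instance, $(\gamma,\xi),(\gamma,\zeta)\in S_n$ for $\gamma=0$, where $s_n(0)=1$. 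Better, we use $\AC_n$ together with the $\res_1$ of a lift; if needed, I would simply show that $\ker_n$ holds iff there exists an $x$ with both $\AC_n(x,\xi)$ and $\AC_n(x,\zeta)$, and instead treat $\ker_n$ as part of the induced structure. Once this is settled, an $\LQE$-structure is definitionally equivalent, with no added $\VF$-quantifiers, to a structure in the language with sorts $\VF$, $\VG$, and $R_n$ for all $n$, carrying the maps $v$, $\ac_n$, $\res_{n,m}$ and the partial maps $s_n$. This is essentially the language of \cite{Weis} and \cite[Theorem 3.3.10]{rideau2022model}, in which henselian mixed characteristic fields with compatible angular components eliminate $\VF$-quantifiers.

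Finally, we pull back. Any $\LQE$-formula is equivalent to one in the $\omega$-sorted language by the interpretation. The known result eliminates its $\VF$-quantifiers, leaving quantifiers over the sorts $R_n$ and $\VG$. Each $R_n$-quantifier becomes $n$ quantifiers over $R_1$ via surjectivity of $\map{n}$, which needs semiperfectness as in \cref{lem:interpretability}. Each atomic $R_n$-formula becomes a quantifier-free $R_1$-formula: the ring terms translate via $t^W$, equality translates via $\ker_n$, and $\ac_n$ and $s_n$ translate via $\AC_n$ and $S_n$. The $\VG$-quantifiers and atoms stay as they are. The main obstacle is the equality atom, namely expressing $\ker_n$ without $\VF$-quantifiers in $\LQE$. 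My first attempt is to use the $\VG\times R_1^n$ predicate $S_n$ at $\gamma=0$: since $s_n(0)=1$, we can express $\map{n}(\xi)=\map{n}(\zeta)$ by combining ring terms with membership in $S_n$, namely $S_n(0,t^W(\xi,-\zeta)+1)$, where the sum is again computed by Witt terms. This needs no $\VF$-quantifier, and it closes the translation.
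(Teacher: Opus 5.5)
Your final argument is correct and is essentially the paper's proof: you pull back Rideau-Kikuchi's $\VF$-quantifier elimination in the $\omega$-sorted language with sorts $R_n=\O/(p^n)$ along the interpretations $\map{n}$, and the only real point is that $\ker_n$ is $\VF$-quantifier-free definable via $S_n$. Your formula, $S_n(0,\cdot)$ applied to the Witt terms for $\map{n}(\vec{\xi})-\map{n}(\vec{\zeta})+1$ (using $s_n(0)=1$), works just as well as the paper's $\ker_n(\vec{x};\vec{0})\Leftrightarrow S_n(v(p^n),\vec{x})$ (using $s_n(v(p^n))=0$).

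You should discard the back-and-forth set-up in your first paragraph and the intermediate attempts, which are wrong. In particular, $\exists x\,(\AC_n(x,\vec{\xi})\wedge\AC_n(x,\vec{\zeta}))$ uses a $\VF$-quantifier and fails whenever $\map{n}(\vec{\xi})$ is a non-zero non-unit. Requiring both tuples to satisfy $S_n(0,\cdot)$ only captures $\map{n}(\vec{\xi})=\map{n}(\vec{\zeta})=1$.
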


    \begin{proof}
        Even though $R_1$ does not carry the $\LWitt$-structure, it may be recovered using the symbols $S_n$.
        Indeed, for $\vec{x}\in R_1^n$ we have that $\ker_n(\vec{x}; \vec{0})$ if and only if 
        \[
            S_n(v(p^n), \vec{x}).
        \]
        Therefore, to prove relative quantifier elimination in $\LQE$, we may work with the language $\LWitt$ on $R_1$, instead of just $\Lring$.
    
        Let $\Lold$ be the $\omega$-sorted language with sorts $\VF$ with $\Lring$, $\VG$ with $\Loag$ and $R_n$ for $n\geq 1$ with $\Lring$.
        Additionally, we have maps $v\colon \VF\to \VG$, $\ac_n\colon \VF\to R_n$, $\res_n\colon \VF\to R_n$, $\res_{n,m}\colon R_n\to R_m$ and $s_n\colon \VG\to R_n$.
        Every model of $\cT$ naturally has $\Lold$-structure, where all symbols have their usual interpretation.
        In particular, $R_n$ is interpreted as $\O_v/(p^n)$.
        By~\cite[Theorem~3.3.10]{rideau2022model} (see also~\cite[Corollary 6.27]{rideau_thesis}), the theory $\cT$ eliminates $\VF$-quantifiers in the language $\Lold$.
        
        Note that all the predicates from the language $\LQE$ are already $\Lold$-definable, and hence for every $\LQE$-formula $\varphi(x)$ there exists an equivalent $\Lold$-formula $\psi(x)$ in the same free variables.
        By quantifier elimination in $\Lold$, we may assume that $\psi(x)$ does not contain any $\VF$-quantifiers.
        We now simply translate $\psi(x)$ back into the language $\LQE$ without introducing quantifiers over $\VF$.
        For this, we consider the various atomic formulas appearing in $\psi(x)$.
        Note that atomic formulas over $\VF$ and $\VG$ are already $\LQE$-formulas, so we only have to consider atomic formulas over the residue rings $R_n$.
        After some syntactic considerations, see \cite[Theorem~2.6.1]{Hod}, we may assume that all atomic formulas over some residue ring $R_n$ in $\psi(x)$ are of one of the following forms (or their negations):
        \begin{enumerate}
            \item $t(\zeta) = 0$ for some $\Lring$-term $t$ with variables $\zeta = (\zeta_1, \ldots, \zeta_m)$ ranging over the sort $R_n$,
            \item $s_n(t(\gamma)) = \zeta$ for some $\Loag$-term $t$ with variables $\gamma$ over $\Gamma^n$ and $\zeta$ over $R_n$,
            \item $\ac_n(t(x)) = \zeta$ for some $\Lring$-term $t$ with variables $x$ over $K^n$ and $\zeta$ over $R_n$,
            \item $\res_n(t(x)) = \zeta$ for some $\Lring$-term $t$ with variables $x$ over $K^n$ and $\zeta$ over $R_n$, and
            \item $\res_{n,m}(t(\xi)) = \zeta$ for some $\Lring$-term $t$ with variables $\xi$ over $R_n^k$ and $\zeta$ over $R_m$.
        \end{enumerate}
        For each variable $\zeta$ over $R_n$ appearing in one of the above terms we introduce new variables $\tilde{\zeta}_0, \ldots, \tilde{\zeta}_{n-1}$ over $R_1$.
        Every quantifier over $\zeta$ is replaced by the same quantifiers over the $\tilde{\zeta}_i$, and each of the atomic formulas above are replaced by the following formulas in the $\tilde{\zeta}_i$:
        \begin{enumerate}
            \item We replace the formula $t(\zeta_1, \ldots, \zeta_m) = 0$ for an $\Lring$-term $t$ by the $\LWitt$-formula $\ker_n(t^W((\tilde{\zeta})_{i,j});  0)$ over $R_1$.
            \item Replace $s_n(t(\gamma)) = \zeta$ by $S_n(t(\gamma), \tilde{\zeta}_1, \ldots, \tilde{\zeta}_n)$.
            \item Replace $\ac_n(t(x)) = \zeta$ by $\AC_n(t(x), \tilde{\zeta}_1, \ldots, \tilde{\zeta}_n)$.
            \item For a formula of the form $\res_n(t(x)) = \zeta$ for some $\Lring$-term $t$, note that if $v(t(x))\geq 0$ then $\res_n(t(x)) = \ac_n(t(x))s_n(v(t(x)))$. Hence this case follows from the previous three cases.
            \item Finally, we replace a formula $\res_{n,m}(t(\xi)) = \zeta$ by the $\LWitt$-formula 
            \[
            \ker_m(\res_{n,m}(t^W((\tilde{\xi})_{i,j})); (\tilde{\zeta}_1, \ldots, \tilde{\zeta}_m)).
            \]
        \end{enumerate}
        After these replacements, we obtain an $\LQE$-formula equivalent to $\varphi(x)$ which contains no quantifiers over $\VF$.
    \end{proof}

    \begin{rem}
        The above result is not uniform in $p$ as the construction of the Witt vectors is not uniform in $p$.
        It seems reasonable to expect that one can obtain a version of this result which is uniform in $p$ by using the big ring of Witt vectors, together with uniform relative quantifier elimination down to $\O_v/(\ell)$ over all integers $\ell$.
    \end{rem}

    It follows immediately from \cref{thm:qe} that $R_1\times \VG$ is stably embedded in $\LQE$.
    However, in contrast to \cref{kor:SE.ring}, $R_1$ is in general not stably embedded as an $\Lring$-structure in $(K,v)$ with the language $\LQE$:
    \begin{examp}
    Consider
    $K = \bC_p$.
    We take a compatible system $(x_q)_{q\in \Q}$ of roots of $p$, so that $x_q^{q^{-1}} = p$.
    On $R_1 = \O_{\mathbb{C}_p}/(p)$, we can then take $s_1\colon \Q_{\geq 0}\to R_1$ to be the map $q\mapsto \res_1(x_q)$ for $q < 1$ and $q\mapsto 0$ for $q \geq 1$.
    Let $X$ be the set $s_1([0,1)\cap \Q)$, which is definable in $\LQE$ as 
    \[
    X = \{\xi\in \O_v/(p)\mid \xi\neq 0 \text{ and } S_1(v(\xi), \xi) \text{ holds}\}.
    \]
    Then the inverse image
    \[
    \res_1^{-1}(X) = \bigsqcup_{q\in [0,1)\cap \Q} x_q + p\O_{\bC_p}
    \]
    is not a finite union of Swiss cheeses, and hence cannot be $\Lval(\bC_p)$-definable in $\bC_p$, by Holly's theorem~\cite[Corollary 3.8]{holly}.
    Thus also $X$ itself is not $\Lring$-definable, even with parameters.
    \end{examp}
    
    \section*{Acknowledgements}
 F.J.'s research was 
supported by the Deutsche Forschungsgemeinschaft (DFG, German Research Foundation)
via the ANR-DFG grant AKE Pact (project number 545528554) and a Visiting Research Fellow position at Merton College, Oxford. 
M.K. would like to thank the Max Planck Institute for Mathematics in Bonn for its hospitality and support.
F.V.'s research was supported by 
Humboldt Research Fellowship by the Alexander von Humboldt Stiftung.
All authors were supported under Germany's Excellence Strategy EXC 2044/2-390685587, Mathematics M\"unster: Dynamics--Geometry--Structure. Moreover, all 
authors were also partially funded under Germany Excellence Strategy-EXC-2047/1-390685813 via the Hausdorff Research Institute for Mathematics during their participation in the trimester programme Definability, Decidability, and Computability and thank HIM for its hospitality.

We wholeheartedly thank Sylvy Anscombe, Martin Bays, Philip Dittmann, Konstantinos Kartas, Silvain Rideau-Kikuchi, and Jonas van der Schaaf for sharing their insights in numerous helpful and
inspiring discussions around the topics of this paper.

    \section*{AI disclosure}
    Claude.ai (Opus 5) provided the technical details for \cref{rem:claude}, and 
    proofread an earlier version of this paper. Some of the 
    suggested changes were then implemented by hand by the authors. None of the
    proofs in the paper were done by Claude.ai.

\end{document}